\newtheorem{definition}{Definition}[section]
\newtheorem{theorem}{Theorem}[section]
\newtheorem{lemma}{Lemma}[section]
\newtheorem{example}{Example}[section]
\numberwithin{equation}{section}
\begin{document}

\centerline{} 

\centerline{} 

\centerline {\Large{\bf Global behavior of positive solutions }} 

\centerline{} 
\centerline {\Large{\bf of a third order difference equations system}} 
%\centerline {\Large{\bf Qualitative properties for a fourth-order }} 
% \centerline {\Large{\bf Global behavior of positive solutions }} 

 %\centerline{} 
% \centerline {\Large{\bf of a   system    of third-order difference equations }} 

%\centerline {\Large{\bf Qualitative properties for a fourth-order }} 
%\centerline {\Large{\bf Global behavior of positive solutions }} 

%\centerline{} 
 %\centerline {\Large{\bf of a third-order nonlinear system    of difference equations }} 
\centerline{} 
\centerline{ \bf{  Mai Nam Phong  }} 
%,\ Tran Hong Thai$^3$

\centerline{ Department of Mathematical Analysis,} 

\centerline{University of Transport and Communications,} 

\centerline{Hanoi City, Vietnam} 
\centerline{mnphong@utc.edu.vn}

%\centerline{$^3$ Department of Mathematics,} 

%\centerline{Hung Yen University of Techonology  and Education,} 

%\centerline{Hung Yen Province, Vietnam} 

%\centerline{hongthai78@gmail.com}
%\centerline{Address of Author1 forth line} 

%\centerline{} 

%\subjclass{39A10}
%\date{February 14,2004}%\keywords{Equilibrium, asymptotic, positive solution, difference equation, nonoscillatory solution}
\date{}          % Enter your date or \today between curly braces
%\maketitle
\begin{abstract}
In this paper, we consider the  following system of difference equations
\begin{equation*}%\label{E:pt1}
%\begin{cases}
x_{n+1}=\alpha+\dfrac{y_{n}^p}{y_{n-2}^p},\
y_{n+1}=\alpha+ \dfrac{x_{n}^q}{x_{n-2}^q},
%\end{cases}
\ n=0, 1, 2, ...
\end{equation*}
where parameters $\alpha, p, q \in (0, \infty)$ and the initial values $x_{-i}$, $y_{-i}$ are arbitrary positive numbers for $ i=-2,-1, 0$. Our main aim is to investigate  semi-cycle analysis of solutions of above system. Also, we study the boundedness of the positive solutions and the global asymptotic stability of the equilibrium point in case $\alpha>1$, $ 0<p,\ q\leq 1$. Moreover,  the rate of convergence  of the solutions is established. Finally, some numerical examples are given to illustrate our theoretical results.  
\end{abstract}
{\bf Mathematics Subject Classification:} 39A10. \\ \noindent
{\bf Keywords:} Semi-cycle, equilibrium, boundedness, global asymptotic stability, rate of convergence.
%\title[Qualitative properties for a ...]{Qualitative properties for a fourth-order rational difference equation (II) }   %(V)   
%\author{Vu Van Khuong$^1$ and Mai Nam Phong$^2$}        % Enter your name between curly braces
%\address{$^1$ Department of Mathematical Analysis\\
%University of transport and communications\\
%Hanoi City, Vietnam}
%\subjclass{39A10}
%\date{February 14,2004}
%\email{vuvankhuong@gmail.com}
%\address{$^2$ Department of Mathematical Analysis\\
%University of transport and communications\\
%Hanoi City, Vietnam}
%\%subjclass{39A10}
%\date{February 14,2004}
%\email{mnphong@gmail.com}

%\keywords{Equilibrium, asymptotic, positive solution, difference equation, nonoscillatory solution}
%\date{}          % Enter your date or \today between curly braces
%\maketitle
\section{Introduction and preliminaries} 
\indent 
%Recently, there has been an increasing  interest in the study of  qualitative analyses  of rational difference equations and systems of difference equations. Difference equations appear naturally as discrete analogues and as numerical solutions of differential and delay differential equations having applications in biology, ecology, economy, physics, and so forth. Although difference equations are very simple in form, it is extremely difficult to understand thoroughly the global behavior of their solutions (see \cite{r1}-\cite{r16} and the references cited therein).\\
Recently, nonlinear difference equations and systems are of wide interest due to their applications in  real life. Such equations appear naturally as the mathematical models which describe biological, physical and economical phenomena.  Although difference equations have very simple forms, however, it is extremely difficult to understand completely the global behavior of their solutions.\\% (see \cite{r1}-\cite{r16} and the references cited therein).\\ 
\indent
In \cite{KaPa1}, Camouzis and Papaschinopoulos  studied the boundedness, persistence and the global behavior of the positive solutions of the following system
\begin{equation*}
x_{n+1}=1+\dfrac{x_n}{y_{n-m}}, y_{n+1}=1+\dfrac{y_n}{x_{n-m}},\ n=0, 1,\ldots
\end{equation*}
where $x_i, y_i$ are positive numbers for $i=-m, -m+1,\ldots, 0$ and $m$ is a positive integer.\\
\indent 
In \cite{Zhanget1}, Zhang et al. investigated the boundedness, persistence and global asymptotic stability of positive solutions of the system of two nonlinear difference equations
\begin{equation*}
 x_{n+1}= A +\dfrac{x_{n-m}}{y_{n }},\ y_{n+1}= B +\dfrac{y_{n-m}}{x_{n}},\ n=0,1,\ldots
 \end{equation*}
where $A, B, x_i, y_i \in (0, \infty)$ for $i=-m, -m+1,\ldots, 0$ and $m \in \mathbb{Z}^+$.\\
\indent 
In \cite{Gumus1}, G\"{u}m\"{u}\c{s} examined the global asymptotic stability of the unique positive equilibrium point and the rate of convergence of positive solutions of the system of two recursive sequences
\begin{equation*}
x_{n+1}= A +\dfrac{y_{n-m}}{y_{n }},\ y_{n+1}= A +\dfrac{x_{n-m}}{x_{n}},\ n=0,1,\ldots,
\end{equation*}
where $A \in (0, \infty)$, $x_i, y_i$ are arbitrary positive numbers for   $i=-m, -m+1,\ldots, 0$ and $m \in \mathbb{Z}^+$.\\
 \indent
 In \cite{Tasdemir}, Ta\c{s}demir  studied the global asymptotic stability of following system of difference equations with quadratic terms   
 \begin{equation*}
 x_{n+1}= A +B\dfrac{y_{n }}{y^2_{n-1 }},\ y_{n+1}= A +B\dfrac{x_{n }}{x^2_{n-1}},\ n=0,1,\ldots,
 \end{equation*}
 where $A$ and $B$ are positive numbers and the initial values $x_i, y_i, $ are positive numbers for $i=-1, 0$.\\
 \indent
 In this paper, motivated by all above mentioned systems we consider the following system of difference equations 
 \begin{equation}\label{E:pt1}
 %\begin{cases}
 x_{n+1}=\alpha+\dfrac{y_{n}^p}{y_{n-2}^p},\
 y_{n+1}=\alpha+ \dfrac{x_{n}^q}{x_{n-2}^q},
 %\end{cases}
 \ n=0, 1, 2, ...
 \end{equation}
 where $\alpha, p, q \in (0, \infty)$ and the initial values $x_{-i}, y_{-i}  \in (0, \infty), i=-2,-1, 0$. More precisely, we investigate semi-cycle analysis of solutions of \eqref{E:pt1}. In addition, we study the boundedness of the positive solutions and the global asymptotic stability of the unique equilibrium point  in case $\alpha>1$, $ 0<p,\ q\leq 1$. Furthermore, we examine the rate of convergence  of the solutions of \eqref{E:pt1}. Finally, some numerical examples are given to verify our theoretical results.\\
 \indent 
We now present some definitions and known theorems which will be used in this paper.\\
\indent
Let $I$ be some interval of real numbers and let
\begin{equation}\label{E:pt7}
f,\ g: I\times I\longrightarrow I
\end{equation}
be continuously differentiable functions. Then, the system of difference equations
\begin{equation}\label{E:pt8}
x_{n+1}=f(y_n,y_{n-2}),\ y_{n+1}=g(x_n,x_{n-2}),\ n=0,1,2,\ldots,
\end{equation}
has a unique solution $\{(x_n,y_n)\}^\infty_{n= 1}$ corresponds to initial values $(x_k, y_k)\in I\times I$ for $k=-2, -1, 0$. 
\begin{definition}(see, \cite{Yalcinkaya})
A point $(\bar{x},\bar{y})$ is called an equilibrium point of the system \eqref{E:pt8} if 
\begin{equation}\label{E:pt9}
\bar{x}=f(\bar{y},\bar{y}),\ \bar{y}=g(\bar{x},\bar{x}).
\end{equation}
\end{definition}
\indent
It is easy to see that the system \eqref{E:pt1} has an unique positive equilibrium point $(\bar{x},\bar{y})=(\alpha+1, \alpha+1)$.
\begin{definition}(see, \cite{Yalcinkaya})
	Let $(\bar{x},\bar{y})$ be a positive equilibrium point of the system \eqref{E:pt8}.\\
	\indent
	A "string" of consecutive terms $\{x_s,\ldots,\ x_m\}$ (resp., $\{y_s,\ldots,\ y_m\}$), $s\geq -2$, $m\leq \infty$ is said to be a positive semi-cycle if $x_i\geq \bar{x}$ (resp., $y_i\geq \bar{y}$),  $i\in \{s,\ldots,m\}$, $x_{s-1}<\bar{x}$ (resp., $y_{s-1}<\bar{y}$), and $x_{m+1}<\bar{x}$ (resp., $y_{m+1}<\bar{y}$).  \\
	\indent
	A "string" of consecutive terms $\{x_s,\ldots,\ x_m\}$ (resp., $\{y_s,\ldots,\ y_m\}$),\ $s\geq -2$, $m\leq \infty$) is said to be a negative semi-cycle if $x_i<\bar{x}$ (resp., $y_i<\bar{y}$),   $i\in \{s,\ldots,m\}$, $x_{s-1}\geq\bar{x}$ (resp., $y_{s-1}\geq\bar{y}$), and $x_{m+1}\geq\bar{x}$ (resp., $y_{m+1}\geq\bar{y}$).  \\
	\indent
	A "string" of consecutive terms $\{(x_s,y_s),\ldots,\ (x_m,y_m)\}$ is said to be a positive semi-cycle (resp., negative semi-cycle)  if both $\{x_s,\ldots,\ x_m\}$ and $\{y_s,\ldots,\ y_m\}$ are positive semi-cycles (resp., negative semi-cycles).  
	\indent
\end{definition}
 %%%%%%%%%%%%%%%%%%%%%%%%%%
 \begin{definition}(see, \cite{Yalcinkaya})
 	Let $(\bar{x},\bar{y})$ be a positive equilibrium point of the system \eqref{E:pt8}.\\
 	\indent
 	A sequence $x_n$ (resp., $y_n$) is said to oscillate    about $\bar x$ (resp., $\bar y$) if for every  $n_0\in \mathbb{N}$ there exist   $l, m \in\mathbb{N}$, $l\geq n_0$, $m\geq n_0$ such that 
 	\begin{equation*}
 	(x_l-\bar x)(x_m-\bar x)\leq 0\ (\text{resp.}, (y_l-\bar y)(y_m-\bar y)\leq 0).  
 	\end{equation*}
 	We say that a   solution $\{(x_n,y_n)\} $ of system \eqref{E:pt8} oscillates   about $(\bar x, \bar y)$ if  $x_n$   oscillates   about $\bar x$  or $y_n$   oscillates   about $\bar y$. 
 \end{definition}
 %%%%%%%%%%%%%%%%%%%%%%%%%%%%%%%%
\begin{definition}(see, \cite{KoLa})
Let $(\bar{x},\bar{y})$ be an equilibrium point of a map $F=(f, g)$, where $f$ and $g$ are continuously differentiable functions at  $(\bar{x},\bar{y})$. The linearized system of  \eqref{E:pt1} about the  equilibrium point  $(\bar x, \bar y)$ is 
	\begin{equation*}
X_{n+1}=F(X_n)=AX_n,   
\end{equation*}
where $	X_n=\left(
x_n, x_{n-1}, x_{n-2}, y_n, y_{n-1}, y_{n-2}\right)^T$ and $A$ is a
  Jacobian matrix of the system  \eqref{E:pt1} about the  equilibrium point  $(\bar x, \bar y)$.  
\end{definition}
\begin{theorem}(see, \cite{KoLa}) \label{T:dl1} 
 For the system $X_{n+1}  =F(X_n),\ n=0, 1,\ldots,$ of difference equations such that $\bar X$ is a fixed point of $F$. If all the eigenvalues of the  Jacobian matrix $A$ about   $ \bar X$ lie inside the open unit disk $|\lambda|<1$, then $ \bar X$  is locally asymptotically stable. If one  of them has a modulus greater than one, then  $ \bar{X} $ is unstable. 
\end{theorem}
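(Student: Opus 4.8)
The plan is to treat this as the standard principle of linearized stability for discrete maps, reducing the nonlinear iteration $X_{n+1}=F(X_n)$ to a contraction (or an expansion) near $\bar X$ by comparing $F$ with its linear part $A$. First I would use differentiability of $F=(f,g)$ at $\bar X$ to write the Taylor expansion
\[
F(X)=\bar X + A(X-\bar X) + R(X), \qquad \frac{\|R(X)\|}{\|X-\bar X\|}\to 0 \ \text{ as } X\to\bar X,
\]
so that the remainder $R$ is negligible compared with the linear displacement sufficiently close to the fixed point. Here $A$ is the Jacobian of $F$ at $\bar X$ and $\|\cdot\|$ denotes a vector norm to be chosen.

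For the stable case (all eigenvalues satisfying $|\lambda|<1$), the key analytic fact I would invoke is that the spectral radius $\rho(A)=\max_i|\lambda_i|<1$ can be approached from above by operator norms: for every $\varepsilon>0$ there is a vector norm on the state space whose induced matrix norm satisfies $\|A\|\le \rho(A)+\varepsilon$. Choosing $\varepsilon$ with $c:=\rho(A)+\varepsilon<1$ fixes such an adapted norm. Using the remainder estimate I would then pick $\delta>0$ so small that $\|R(X)\|\le \tfrac{1-c}{2}\|X-\bar X\|$ whenever $\|X-\bar X\|<\delta$. For any $X_n$ in this ball one obtains
\[
\|X_{n+1}-\bar X\| \le \|A\|\,\|X_n-\bar X\| + \|R(X_n)\| \le \Big(c+\tfrac{1-c}{2}\Big)\|X_n-\bar X\| = \tfrac{1+c}{2}\|X_n-\bar X\|.
\]
Since $r:=\tfrac{1+c}{2}<1$, the ball is forward invariant and $\|X_n-\bar X\|\le r^n\|X_0-\bar X\|\to 0$, which is precisely local asymptotic stability (Lyapunov stability together with geometric attractivity).

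For the unstable case I would decompose the state space into the invariant generalized eigenspaces $E^u\oplus E^c$, where $E^u$ collects the eigenvalues with $|\lambda|>1$ (nonempty by hypothesis) and $E^c$ the rest. On $E^u$ one chooses an adapted norm in which $A$ expands, say $\|Av\|\ge a\|v\|$ with $a>1$ for $v\in E^u$. Projecting the iteration onto $E^u$ and again absorbing the small remainder $R$ shows that the $E^u$-component of any orbit starting arbitrarily close to $\bar X$, but with nonzero unstable component, grows geometrically until it leaves a fixed neighborhood; hence $\bar X$ cannot be stable.

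The main obstacle is the instability half. Unlike the clean one-line contraction of the stable case (immediate once the adapted norm is in hand), here one must control the coupling between the expanding and the non-expanding directions under the nonlinear remainder, which requires a careful cone or adapted-norm estimate to guarantee that the unstable component genuinely escapes rather than being cancelled by contributions transferred from $E^c$. Since this theorem is classical and is quoted from \cite{KoLa}, in the paper one simply cites it, but the argument outlined above is the route I would follow to establish it from scratch.
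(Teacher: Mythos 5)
The paper does not prove this theorem at all: it is quoted verbatim from \cite{KoLa} as a known result (the standard linearized stability theorem), so there is no in-paper argument to compare yours against. Your outline is the classical proof, and the stable half is essentially complete: Taylor expansion $F(X)=\bar X+A(X-\bar X)+R(X)$ with $\|R(X)\|=o(\|X-\bar X\|)$, an adapted norm with $\|A\|\le\rho(A)+\varepsilon<1$, and the resulting forward-invariant ball with geometric decay. That is exactly how the cited sources establish local asymptotic stability, and no step there is in doubt.

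The only place where your argument is not yet a proof is the instability half, and you have correctly named the difficulty rather than resolved it. Two points deserve care. First, the complementary spectrum $E^c$ may contain eigenvalues of modulus exactly $1$, so the best adapted bound there is $\|A|_{E^c}\|\le 1+\varepsilon'$, not a contraction; you must choose $\varepsilon'$ and the expansion constant $a>1$ on $E^u$ so that $1+\varepsilon'<a$ \emph{after} absorbing the remainder, and then run a cone argument: show the cone $\{\|v^u\|\ge\|v^c\|\}$ is locally invariant and that inside it $\|v^u_{n+1}\|\ge a'\|v^u_n\|$ for some $a'>1$, forcing escape from any fixed neighborhood. Second, one must start the orbit at points with nonzero unstable component, which exist arbitrarily close to $\bar X$, to conclude failure of Lyapunov stability. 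These are standard but nontrivial estimates (they constitute the proof of the unstable-manifold/instability theorem in \cite{Elaydi1} or \cite{KoLa}); as written, your sketch asserts the conclusion of the cone lemma without proving it. Since the paper only cites the theorem, this is a gap relative to a from-scratch proof, not relative to anything the paper does.
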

\indent
Some related systems of difference equations can be found in \cite{Zhanget2}-\cite{Stevic4}. More results concerning  difference equations and their systems are included in \cite{KoLa, Elaydi1, Elaydi2}. % and the references cited therein.\\
%We now restate some new definitions in \cite{r15}.  
\section{Semi-cycle analysis of \eqref{E:pt1}}
In this section, we study   the behavior of positive solutions of system \eqref{E:pt1} by using  semi-cycle analysis. It is clearly that system \eqref{E:pt1} has a unique possitive equilibrium point $(\bar{x}, \bar{y})=(\alpha+1, \alpha+1)$.%The proof of Lemma 2.1 is clear from \eqref{E:pt1}. So, it will be omitted.\\
\begin{lemma}\label{L:bd1}
Assume that $\{(x_n,y_n)\}_{n=-2}^\infty$ is a solution of the system \eqref{E:pt1}. Then, either $\{(x_n,y_n)\}_{n=-2}^\infty$ is non-oscillatory or it oscillates about the  equilibrium  $(\bar{x}, \bar{y})=(\alpha+1, \alpha+1)$ with semi-cycles that if there exists a semi-cycle with at least two terms, then every consecutive semi-cycle has at least three terms. 
\end{lemma}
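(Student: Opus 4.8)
The plan is to base everything on two sign equivalences read off directly from \eqref{E:pt1}. Since $\bar x=\bar y=\alpha+1$, subtracting the equilibrium value gives
\begin{equation*}
x_{n+1}-\bar x=\frac{y_n^{p}-y_{n-2}^{p}}{y_{n-2}^{p}},\qquad y_{n+1}-\bar y=\frac{x_n^{q}-x_{n-2}^{q}}{x_{n-2}^{q}}.
\end{equation*}
As all terms are positive and $t\mapsto t^{p},\ t\mapsto t^{q}$ are strictly increasing on $(0,\infty)$, this yields
\begin{equation*}
x_{n+1}\ge\bar x\iff y_n\ge y_{n-2},\qquad y_{n+1}\ge\bar y\iff x_n\ge x_{n-2},
\end{equation*}
together with the same equivalences with every inequality reversed. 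These two equivalences, with their built-in two-step lag $n\mapsto n-2$, are the only structural facts I will need; everything else is a consequence of them.

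If the solution is non-oscillatory the first alternative holds and there is nothing to prove, so I assume it oscillates and pick a semi-cycle with at least two terms. I treat the case where this semi-cycle is positive, the negative case being identical after reversing every inequality. Suppose it ends at index $m$; its last two terms give the four anchor inequalities
\begin{equation*}
x_{m-1}\ge\bar x,\quad x_m\ge\bar x,\quad y_{m-1}\ge\bar y,\quad y_m\ge\bar y,
\end{equation*}
while, the next semi-cycle being negative, $x_{m+1}<\bar x$ and $y_{m+1}<\bar y$. The target is to show $x_{m+2},x_{m+3}<\bar x$ and $y_{m+2},y_{m+3}<\bar y$, so that the negative semi-cycle carries at least three terms.

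The core is a short chain in which each new inequality is produced by combining a term just below the equilibrium with a term two steps earlier that lies above it. From $y_{m+1}<\bar y\le y_{m-1}$ the first equivalence (at $n=m+1$) gives $x_{m+2}<\bar x$, and symmetrically $x_{m+1}<\bar x\le x_{m-1}$ gives $y_{m+2}<\bar y$. Feeding these back in, $y_{m+2}<\bar y\le y_m$ forces $x_{m+3}<\bar x$ and $x_{m+2}<\bar x\le x_m$ forces $y_{m+3}<\bar y$, which together with $x_{m+1}<\bar x$, $y_{m+1}<\bar y$ is exactly the claim. Finally, since a semi-cycle of length at least three is in particular one of length at least two, the same implication applies to it, so by induction every semi-cycle following the first long one has at least three terms.

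The step I expect to demand the most care is the bookkeeping forced by the two-step lag: one must align each inequality such as $x_{m+1}<\bar x\le x_{m-1}$ with the correct instance of the sign equivalence, and one must check that a positive semi-cycle of length at least two genuinely supplies anchors at \emph{both} indices $m-1$ and $m$. This is precisely where the hypothesis ``at least two terms'' enters, and why it cannot be relaxed to a single term, since the chain consumes the values at $m-1$. A secondary point to keep honest is that the two components change sign in step, so that the pair semi-cycles introduced above are well defined; this again follows from the equivalences, because the sign of each component at step $n+1$ is dictated by the monotonicity of the other component over the two preceding steps.
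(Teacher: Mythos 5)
Your proof is correct and follows essentially the same route as the paper's: both arguments reduce to the observation that $x_{n+1}-\bar x$ has the sign of $y_n-y_{n-2}$ (and symmetrically), and then chain the anchor inequalities supplied by the last two terms of the given semi-cycle to force the next semi-cycle to run for at least three indices. Your closing induction ("length at least three implies length at least two, so the implication re-applies") is in fact a slightly cleaner way of propagating the conclusion than the paper's, which only treats the case where the new semi-cycle has length exactly three, but the underlying mechanism is identical.
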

\begin{proof}
	Suppose that $\{(x_n,y_n)\}_{n=-2}^\infty$ is a solution of the system \eqref{E:pt1}, and there exists $n_0\geq 0$ such that $(x_{n_0},y_{n_0})$ is the final term of a semi-cycle that has at least two terms. Then, the following two cases can be occurred.\\
	Case $1$: $\ldots, x_{n_0-1}, x_{n_0}<\alpha+1\leq x_{n_0+1}$ and $\ldots, y_{n_0-1}, y_{n_0}<\alpha+1\leq y_{n_0+1}$.\\
	Case $2$: $\ldots, x_{n_0-1}, x_{n_0}\geq \alpha+1 > x_{n_0+1}$ and $\ldots, y_{n_0-1}, y_{n_0}\geq \alpha+1 > y_{n_0+1}$.\\
	We now examine the first case, the second case is similar and will be neglected. Assume that $\ldots, x_{n_0-1}, x_{n_0}<\alpha+1\leq x_{n_0+1}$  and $\ldots, y_{n_0-1}, y_{n_0}<\alpha+1\leq y_{n_0+1}$. Then, from \eqref{E:pt1} we obtain
	 \begin{equation}\label{E:pt22}
	x_{n_0+2}=\alpha+\dfrac{y_{n_0+1}^p}{y_{n_0-1}^p}>\alpha+1,\
	y_{n_0+2}=\alpha+\dfrac{x_{n_0+1}^q}{x_{n_0-1}^q}>\alpha+1.
	 \end{equation}
	 Similarly to \eqref{E:pt22}, we have $x_{n_0+3}>\alpha+1$ and $y_{n_0+3}>\alpha+1$. Hence, the semi-cycle begining with $(x_{n_0+1},y_{n_0+1})$ has at least three terms. Suppose that the semi-cycle which starts with $(x_{n_0+1},y_{n_0+1})$ has length three, then the next semi-cycle will begin with $(x_{n_0+4},y_{n_0+4})$ such that $ x_{n_0+1}, x_{n_0+2}, x_{n_0+3}\geq \alpha+1> x_{n_0+4}$  and $ y_{n_0+1}, y_{n_0+2}, y_{n_0+3}\geq \alpha+1> y_{n_0+4}$, then 
	  \begin{equation}\label{E:pt23}
	 x_{n_0+5}=\alpha+\dfrac{y_{n_0+4}^p}{y_{n_0+2}^p}< \alpha+1,\
	 y_{n_0+2}=\alpha+\dfrac{x_{n_0+4}^q}{x_{n_0+2}^q}<\alpha+1.
	 \end{equation}
	 By arguing similar to \eqref{E:pt23}, we have  $x_{n_0+6}<\alpha+1$ and $y_{n_0+6}<\alpha+1$. From above arguments, we can conclude that the semi-cycle containing $(x_{n_0+1},y_{n_0+1})$ and every semi-cycle after that has at least three terms. The proof is completed.   
\end{proof}
\begin{lemma}\label{L:bd2}
System \eqref{E:pt1} has no nontrivial two periodic solutions.
\end{lemma}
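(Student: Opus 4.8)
The plan is to argue by contradiction: I would suppose that system \eqref{E:pt1} admits a two-periodic solution and show that it must coincide with the equilibrium $(\bar x,\bar y)=(\alpha+1,\alpha+1)$, so that no \emph{nontrivial} periodic solution of period two can exist. Concretely, a period-two solution is determined by two pairs of values, so I would write the ansatz
$$x_{2n}=\phi_1,\quad x_{2n+1}=\phi_2,\quad y_{2n}=\psi_1,\quad y_{2n+1}=\psi_2,\qquad n\ge 0,$$
and attempt to determine the four positive constants $\phi_1,\phi_2,\psi_1,\psi_2$.

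The decisive structural observation is that the delay appearing in each equation of \eqref{E:pt1} is exactly $2$, which equals the assumed period. Consequently $y_{n-2}=y_n$ and $x_{n-2}=x_n$ for every index $n$, so the ratio terms $y_n^p/y_{n-2}^p$ and $x_n^q/x_{n-2}^q$ are identically equal to $1$. First I would substitute the period-two ansatz into $x_{n+1}=\alpha+y_n^p/y_{n-2}^p$, taking $n$ even and then $n$ odd; in both cases the fraction collapses to $1$, which forces $\phi_1=\phi_2=\alpha+1$. Repeating the same computation with $y_{n+1}=\alpha+x_n^q/x_{n-2}^q$ yields $\psi_1=\psi_2=\alpha+1$. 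Hence the only period-two solution is the constant one equal to the equilibrium, i.e.\ the trivial solution, which completes the contradiction.

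This argument is essentially immediate, so I do not expect a serious analytic obstacle; the only point requiring genuine care is the index bookkeeping, namely verifying that $n-2$ and $n$ always share the same parity under a period-two hypothesis and that \emph{both} parities of the starting index are treated, so that all four unknowns $\phi_1,\phi_2,\psi_1,\psi_2$ are pinned down rather than merely two of them. I would also note that the same cancellation makes the conclusion completely insensitive to the parameters $p$ and $q$, since the exponents play no role once the ratios are equal to $1$.
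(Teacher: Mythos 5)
Your argument is correct and is essentially the same as the paper's: both proofs observe that a period-two solution forces $y_{n-2}=y_n$ and $x_{n-2}=x_n$, so the ratios in \eqref{E:pt1} equal $1$ and every term collapses to $\alpha+1$, i.e.\ the equilibrium. The extra parity bookkeeping you describe is harmless but not needed, since the identity $x_{n+1}=\alpha+1$ holds for every $n$ at once.
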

\begin{proof}
	Suppose that system \eqref{E:pt1} has a two periodic solution. Then  $ (x_{n-2}, y_{n-2})=(x_{n}, y_{n})$, for all $n\geq 0$. Therefore, 
	 \begin{equation*}%\label{E:pt24}
	x_{n+1}=\alpha+\dfrac{y_{n }^p}{y_{n-2}^p}= \alpha+1,\
	y_{n+1}=\alpha+\dfrac{x_{n }^q}{x_{n-2}^q}=\alpha+1, \text{for all } n\geq 0.
	\end{equation*}
	Hence, the solution $(x_{n}, y_{n})=(\alpha+1, \alpha+1)$ is the equilibrium solution of system  \eqref{E:pt1}.
	\end{proof}
 \begin{lemma}\label{L:bd3}
 	If the system \eqref{E:pt1} has an increasing solution then it is non-oscillatory positive solution.
 \end{lemma}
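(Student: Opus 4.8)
The plan is to exploit the monotonicity hypothesis directly: I would feed the inequalities $x_n\geq x_{n-2}$ and $y_n\geq y_{n-2}$ coming from an increasing solution straight into the defining recursions of \eqref{E:pt1}, forcing every term from index one onward to sit at or above the common equilibrium value $\alpha+1$. Once the whole tail of the solution lies on one side of $(\bar x,\bar y)=(\alpha+1,\alpha+1)$, the oscillation test in Definition~1.3 cannot be met, so the solution is non-oscillatory; and since it never falls below the equilibrium, it is a positive solution.

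First I would record what ``increasing'' buys us. If $\{(x_n,y_n)\}$ is increasing, then in particular $x_n\geq x_{n-2}$ and $y_n\geq y_{n-2}$ for every $n\geq 0$, whence $\frac{x_n}{x_{n-2}}\geq 1$ and $\frac{y_n}{y_{n-2}}\geq 1$. Because $p,q>0$, raising a quantity that is at least one to either power keeps it at least one, so $\left(\frac{x_n}{x_{n-2}}\right)^{q}\geq 1$ and $\left(\frac{y_n}{y_{n-2}}\right)^{p}\geq 1$.

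Next I would substitute these into \eqref{E:pt1}. From the first equation, $x_{n+1}=\alpha+\left(\frac{y_n}{y_{n-2}}\right)^{p}\geq \alpha+1=\bar x$, and from the second, $y_{n+1}=\alpha+\left(\frac{x_n}{x_{n-2}}\right)^{q}\geq \alpha+1=\bar y$, both valid for all $n\geq 0$. Reindexing, this says $x_m\geq\bar x$ and $y_m\geq\bar y$ for all $m\geq 1$. Hence, taking $n_0=1$, every pair of later indices satisfies $(x_l-\bar x)(x_m-\bar x)\geq 0$ and $(y_l-\bar y)(y_m-\bar y)\geq 0$, so neither $x_n$ nor $y_n$ oscillates about its equilibrium: the solution is non-oscillatory and positive.

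The only delicate point, and the step I expect to need a little care, is the borderline case where a later term equals $\alpha+1$ rather than strictly exceeding it, since the oscillation test is stated with a non-strict $\leq 0$. I would handle it by observing that $x_{n+1}=\alpha+1$ forces $y_n=y_{n-2}$; under strict monotonicity this cannot occur, so all tail terms are strictly above $\bar x$ (and likewise above $\bar y$), yielding genuine non-oscillation. In the non-strict reading, persistent equalities can arise only along the constant orbit $(\alpha+1,\alpha+1)$, which is itself trivially non-oscillatory, so the conclusion stands in either interpretation.
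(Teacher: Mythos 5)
Your argument is correct, and it is more direct than the one in the paper. The paper splits into two cases according to whether $(x_1,y_1)$ lies above or below $(\alpha+1,\alpha+1)$, dismisses the first as trivial, and in the second argues by contradiction that the negative semi-cycle containing $(x_1,y_1)$ cannot reach $(x_4,y_4)$ (since $x_4<\alpha+1$ would force $y_3<y_1$, contradicting monotonicity), so the solution eventually enters an infinite positive semi-cycle. You instead feed the monotonicity inequalities $y_n\geq y_{n-2}$ and $x_n\geq x_{n-2}$ straight into \eqref{E:pt1} to get $x_{n+1},y_{n+1}\geq\alpha+1$ for all $n\geq0$; this shows at once that the paper's second case is in fact vacuous and that \emph{every} generated term from index one onward already sits at or above the equilibrium, which is a cleaner and slightly stronger conclusion. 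The one point to flag is that your first step uses $y_0\geq y_{-2}$ and $x_0\geq x_{-2}$, i.e.\ it assumes ``increasing'' applies to the initial values as well; if monotonicity is only asserted from the generated terms onward, your inequality starts at $n\geq3$ instead, but eventual membership in a positive semi-cycle still gives non-oscillation, so the conclusion survives either reading. Your discussion of the borderline equality $x_{n+1}=\alpha+1$ is a reasonable extra care that the paper simply ignores.
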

\begin{proof}
Assume that $\{(x_n,y_n)\}_{n=-2}^\infty$ is an increasing solution of the system \eqref{E:pt1}. Then, either  $\alpha+1\leq x_1$ and $\alpha+1\leq y_1$ or $x_1<\alpha+1 $ and $y_1<\alpha+1$. The first case is trivial so it will be omitted. We now consider the second case. If  $x_1<\alpha+1 $ and $y_1<\alpha+1$, then we can assert that the semi-cycle containing $(x_1,y_1)$ has at most three terms. Assume by contradiction that the negative semi-cycle begining with $(x_1,y_1)$ involves $(x_4,y_4)$. Then from 
\begin{equation*}%\label{E:pt24}
x_{4}=\alpha+\dfrac{y_{3 }^p}{y_{1}^p}< \alpha+1\ \text{and}\ 
y_{4}=\alpha+\dfrac{x_{3 }^q}{x_{1}^q}<\alpha+1 
\end{equation*}
 imply that $y_{3 }<y_{1}$ and $x_{3 }<x_{1}$ which contradicts the fact that the solution is increasing, so any increasing solution of system \eqref{E:pt1}  is non-oscillatory positive solution.   
\end{proof}
 \begin{lemma}\label{L:bd3}
	System \eqref{E:pt1} has no non-oscillatory negative solutions (has no infinite negative semi-cycle).
\end{lemma}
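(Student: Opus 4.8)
The plan is to argue by contradiction. Suppose that \eqref{E:pt1} admits an infinite negative semi-cycle, that is, suppose there is an index $N$ such that $x_n<\alpha+1$ and $y_n<\alpha+1$ for every $n\ge N$. First I would record the elementary lower bound that is immediate from the equations, namely $x_n>\alpha>0$ and $y_n>\alpha>0$ for all $n\ge 1$, since in each equation a strictly positive quantity is added to $\alpha$. This guarantees that every term stays in a bounded interval and that every denominator appearing below is bounded away from zero.

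The key step is to convert the two sign conditions into monotonicity of the even- and odd-indexed subsequences. Because $p,q>0$, the map $t\mapsto t^p$ is increasing on $(0,\infty)$, so the inequality $x_{n+1}=\alpha+y_n^p/y_{n-2}^p<\alpha+1$ is equivalent to $y_n<y_{n-2}$, and likewise $y_{n+1}<\alpha+1$ is equivalent to $x_n<x_{n-2}$. Hence, for all sufficiently large $n$ one has both $x_n<x_{n-2}$ and $y_n<y_{n-2}$, so each of the four subsequences $\{x_{2k}\}$, $\{x_{2k+1}\}$, $\{y_{2k}\}$, $\{y_{2k+1}\}$ is eventually strictly decreasing. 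Being bounded below by $\alpha$, each converges; I write $L_2=\lim_k x_{2k+1}$ and $M_1=\lim_k y_{2k}$ (the other two limits are not needed). Since $\{x_{2k+1}\}$ is strictly decreasing with all terms below $\alpha+1$, its limit satisfies $L_2\le x_{2K+1}<\alpha+1$, where $K$ is the first index of the decreasing range; in particular $L_2<\alpha+1$, while $M_1\ge\alpha>0$.

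Finally I would pass to the limit in the first equation of \eqref{E:pt1} along the even indices $n=2k$. Both $y_{2k}$ and $y_{2k-2}$ tend to the same positive limit $M_1$, so $y_{2k}^p/y_{2k-2}^p\to 1$, and therefore $L_2=\lim_k x_{2k+1}=\alpha+1$. This contradicts $L_2<\alpha+1$, and the contradiction shows that no infinite negative semi-cycle can occur, which is the assertion of the lemma.

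The main obstacle is not the limiting computation but ensuring that the limit is genuinely separated from the value $\alpha+1$: a sequence of terms all lying below $\alpha+1$ could a priori converge to $\alpha+1$, so the strict inequality $L_2<\alpha+1$ has to be extracted from the \emph{monotone} decrease of $\{x_{2k+1}\}$, not merely from the termwise bound. Note also that the cross-coupled structure of the system is essential here: the decrease of the $x$-subsequences comes from the sign condition on $y$, while the decrease (hence convergence) of the $y$-subsequences comes from the sign condition on $x$. This is precisely where both halves of the negative semi-cycle hypothesis are used, and it is what makes all the relevant parity subsequences decrease simultaneously.
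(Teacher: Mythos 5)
Your proof is correct and follows essentially the same route as the paper's: assume an infinite negative semi-cycle, deduce $x_n<x_{n-2}$ and $y_n<y_{n-2}$ from the sign conditions, obtain convergence of the four parity subsequences, and derive a contradiction from the fact that a strictly decreasing sequence below $\alpha+1$ cannot converge to $\alpha+1$. The only difference is that you identify the limit directly by letting $n\to\infty$ in $x_{2k+1}=\alpha+y_{2k}^p/y_{2k-2}^p$ (the ratio tends to $1$ since both terms share the same positive limit), whereas the paper routes this step through Lemma~\ref{L:bd2} on period-two solutions; your version is self-contained and, if anything, slightly cleaner.
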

\begin{proof}
Assume by contradiction that system \eqref{E:pt1} has a non-oscillatory  solution  $\{(x_n,y_n)\}_{n=-2}^\infty$ which has an infinite negative semi-cycle, and assume this semi-cycle begins with  $(x_N,y_N)$, where $N\geq -2$. Then for all $n\geq N$, $(x_n,y_n) < (\alpha+1, \alpha+1)$. From,
  \begin{equation*}%\label{E:pt24}
 x_{n+1}=\alpha+\dfrac{y_{n }^p}{y_{n-2}^p}<\alpha+1,\
 y_{n+1}=\alpha+\dfrac{x_{n }^q}{x_{n-2}^q}<\alpha+1, \text{for all } n\geq N+2,%\max \{1, N-1\},
 \end{equation*}
 we imply $y_n<y_{ n-2}$ and $x_n<x_{ n-2}$. So, we have $\alpha<\ldots<x_{ n+2}<x_n< x_{ n-2}<\alpha+1$ and $\alpha<\ldots<y_{ n+2}<y_n< y_{ n-2}<\alpha+1$ for all $n\geq N+2$, which means that $\{x_n\}$, $\{y_n\}$ have two subsequences $\{x_{2n}\}$, $\{x_{2n+1}\}$ and $\{y_{2n}\}$, $\{y_{2n+1}\}$ that are decreasing and bounded from below. Hence, there exist $a_1$, $a_2$, $b_1$, $b_2$ such that  
\begin{equation*}%\label{E:pt15}
\begin{aligned}
\lim_{n\to\infty} x_{2n}=a_1, \ \lim_{n\to\infty}x_{2n+1}=a_2, \\
\lim_{n\to\infty} y_{2n}=b_1,\ \lim_{n\to\infty} y_{2n+1}=b_2.\\
\end{aligned}
\end{equation*}
Thus, $(a_1, b_1)$,  $(a_2, b_2)$ is a periodic solution of period two of system \eqref{E:pt1}, which contradicts Lemma \ref{L:bd2} unless the solution is the trivial solution. Hence, the solution converges to the equilibrium, which is a contradiction, because the solution is diverging from the equilibrium. This proves that system \eqref{E:pt1} has no non-oscillatory negative solutions. 
\end{proof}
\begin{lemma}\label{L:bd4}
System \eqref{E:pt1} has no decreasing non-oscillatory solutions.
\end{lemma}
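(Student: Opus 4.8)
The plan is to argue by contradiction, exploiting the structural feature that the right-hand sides of \eqref{E:pt1} compare $y_n$ with $y_{n-2}$ and $x_n$ with $x_{n-2}$; this meshes exactly with a monotonicity hypothesis. Suppose that $\{(x_n,y_n)\}_{n=-2}^\infty$ is a decreasing non-oscillatory solution, so that both $x_n$ and $y_n$ are decreasing sequences. The key observation is that for every $n\geq 0$ the decreasing property gives $y_n\leq y_{n-2}$ and $x_n\leq x_{n-2}$, whence $y_n^p/y_{n-2}^p\leq 1$ and $x_n^q/x_{n-2}^q\leq 1$. Substituting into \eqref{E:pt1} immediately yields $x_{n+1}\leq\alpha+1$ and $y_{n+1}\leq\alpha+1$ for all $n\geq 0$. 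For a strictly decreasing solution one has $y_n<y_{n-2}$ and $x_n<x_{n-2}$, so these inequalities are strict; the borderline situation in which some ratio equals $1$ would force a constant stretch and hence, by Lemma \ref{L:bd2}, the equilibrium (trivial) solution, which is excluded from consideration.

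Next I would translate this into a statement about semi-cycles. The bound just obtained shows that $x_n<\alpha+1$ and $y_n<\alpha+1$ for every $n\geq 1$, i.e. $(x_n,y_n)<(\bar x,\bar y)$ with $(\bar x,\bar y)=(\alpha+1,\alpha+1)$ for all $n\geq 1$. In particular the tail of the solution can never return to or rise above the equilibrium, so it cannot be a (decreasing) non-oscillatory \emph{positive} solution; rather, from $n=1$ onward it consists of a single infinite negative semi-cycle and is therefore a non-oscillatory negative solution. This is precisely the configuration ruled out by Lemma \ref{L:bd3}, which gives the desired contradiction and proves that no decreasing non-oscillatory solution can exist.

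The only delicate point is this reduction: one must verify that the decreasing hypothesis, through the ratio estimate, pins the entire tail of the solution \emph{strictly below} the equilibrium, rather than merely having it converge to the equilibrium from above. It is exactly this that allows an appeal to the nonexistence of non-oscillatory negative solutions (Lemma \ref{L:bd3}) instead of having to reprove a separate limiting and periodicity argument. Once the comparison $y_n^p/y_{n-2}^p\leq 1$ (and its $x$-analogue) is in hand, everything else is immediate, so I expect the substantive content of the proof to reside entirely in the one-line inequality $x_{n+1}=\alpha+y_n^p/y_{n-2}^p\leq\alpha+1$.
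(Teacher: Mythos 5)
Your proposal is correct and lands on the same final contradiction as the paper --- an appeal to Lemma \ref{L:bd3}, the nonexistence of an infinite negative semi-cycle --- but it gets there by a genuinely different and in fact more robust route. The paper never touches the recurrence: it simply splits into two positional cases (the decreasing solution lies below $\alpha+1$ from the start, or it crosses $\alpha+1$ at some index $n_0$) and observes that either way the tail is an infinite negative semi-cycle. That dichotomy silently omits a third configuration, namely a decreasing solution that stays at or above $\alpha+1$ forever, which would be an infinite \emph{positive} semi-cycle and would not contradict Lemma \ref{L:bd3} at all. Your argument closes exactly this gap: from $y_n\leq y_{n-2}$ and $x_n\leq x_{n-2}$ you read off from \eqref{E:pt1} that $x_{n+1}\leq\alpha+1$ and $y_{n+1}\leq\alpha+1$ for all $n$, so the tail \emph{cannot} remain above the equilibrium and the negative-semi-cycle lemma applies. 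The one place to be slightly more careful is the strictness issue you yourself flag: the definition of a negative semi-cycle requires $x_i<\bar x$, while monotonicity only gives $x_{n+1}\leq\alpha+1$. Your patch via Lemma \ref{L:bd2} is a little quick --- a single equality $y_n=y_{n-2}$ gives $x_{n+1}=\alpha+1$ but not immediately a two-periodic solution --- though for a decreasing sequence $y_{n-2}\geq y_{n-1}\geq y_n=y_{n-2}$ forces a constant stretch, and iterating this does pin the solution to the equilibrium, so the borderline case is genuinely degenerate and the argument can be completed. Net effect: your proof uses the structure of the equation where the paper uses only a (slightly incomplete) trichotomy of positions, and it is the stronger of the two.
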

\begin{proof}
Assume that system \eqref{E:pt1} has a decreasing non-oscillatory solutions. Then we have one of two following cases will be occurred.\\% for any $n_0\geq 0$.\\
	Case $1$: $\ldots\leq x_{3}\leq  x_{2}\leq  x_{1}\leq \alpha+1 $ and  $\ldots\leq y_{3}\leq  y_{2}\leq  y_{1}\leq \alpha+1 $.\\
	Case $2$: There exists a positive integer $n_0 \geq 3$, such that  $\ldots\leq x_{n_0+2 }\leq  x_{n_0+1} \leq  \alpha+1\leq  x_{n_0 }\leq   x_{n_0-1}\leq \ldots$ and $\ldots\leq y_{n_0+2 }\leq  y_{n_0+1} \leq  \alpha+1\leq  y_{n_0 }\leq   y_{n_0-1}\leq \ldots$. \\
In both cases, the solution has an infinite negative semi-cycle which contradicts Lemma \ref{L:bd3}. Therefore, system  \eqref{E:pt1} has no decreasing non-oscillatory solutions.
\end{proof}
\section{Boundedness and persistence of system \eqref{E:pt1}}
In this section, we will examine the boundedness and persistence of system \eqref{E:pt1} in case $\alpha >1$ and $0<p, q\leq 1$.
 \begin{theorem}\label{T:dl1}
Assume that $\alpha >1$ and $0<p, q\leq 1$. Then every positive solution of system \eqref{E:pt1} is bounded and persists. In detail, we have  
\begin{equation*}
\alpha < x_{2n+2}\leq  x_2a^{n}+ \dfrac{b}{1-a}(1-a^n) 
\end{equation*}
and 
\begin{equation*}
\alpha < x_{2n+3} \leq x_3a^{n}+ \dfrac{b}{1-a}(1-a^n),
\end{equation*}
similarly,
\begin{equation*}
\alpha < y_{2n+2}\leq  y_2a^{n}+ \dfrac{c}{1-a}(1-a^n)
\end{equation*}
and 
\begin{equation*}
\alpha < y_{2n+3}\leq y_3a^{n}+ \dfrac{c}{1-a}(1-a^n),
\end{equation*}
for all $n\geq 0$, where $a=\dfrac{1}{\alpha^{p+q}}$, $b= \alpha^{1-p}+\alpha$ and  $c=\alpha^{1-q}+\alpha$.
\end{theorem}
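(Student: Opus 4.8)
The plan is to split the claim into the (easy) lower bound together with persistence, and the (substantive) upper bound, which I will obtain as the solution of a first--order linear recurrence along the even- and odd-indexed subsequences. The lower bound is immediate: in \eqref{E:pt1} each ratio on the right is strictly positive, so $x_n=\alpha+(\text{positive})>\alpha$ and likewise $y_n>\alpha$ for every $n\ge 1$. This already gives the left-hand inequalities $\alpha<x_{2n+2}$, $\alpha<x_{2n+3}$, $\alpha<y_{2n+2}$, $\alpha<y_{2n+3}$, and hence persistence.

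For the upper bound I would establish the single-step estimate $x_{2n+4}\le a\,x_{2n+2}+b$. Starting from $x_{2n+4}=\alpha+y_{2n+3}^p/y_{2n+1}^p$ and using $y_{2n+1}>\alpha$, reduce to $x_{2n+4}<\alpha+y_{2n+3}^p/\alpha^{p}$. Next, from $y_{2n+3}=\alpha+x_{2n+2}^q/x_{2n}^q$ together with $x_{2n}>\alpha$ I get $y_{2n+3}<\alpha+x_{2n+2}^q/\alpha^{q}$. The decisive analytic tool is the subadditivity inequality $(u+v)^p\le u^p+v^p$, valid for $u,v\ge 0$ and $0<p\le 1$; applying it converts the $p$-th power of the sum into $y_{2n+3}^p\le \alpha^p+x_{2n+2}^{pq}/\alpha^{pq}$.

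The step I expect to be the crux is recovering the exponent $1$ (and the precise base $\alpha^{p+q}$) from the intermediate exponent $pq$, so that $a$ comes out as $\alpha^{-(p+q)}$ rather than a weaker constant. Concretely, I would prove the auxiliary inequality $x^{pq}/\alpha^{pq}\le x/\alpha^{q}$ for $x>\alpha>1$ and $0<p,q\le 1$: rewriting it as $x^{pq-1}\le\alpha^{pq-q}$, the estimate $x^{pq-1}\le\alpha^{pq-1}$ (since $x\ge\alpha$ and $pq-1\le 0$) followed by $\alpha^{pq-1}\le\alpha^{pq-q}$ (which is exactly $q\le 1$) finishes it. Combined with $\alpha^{p}\le\alpha$, this yields $y_{2n+3}^p\le\alpha+x_{2n+2}/\alpha^{q}$, whence $x_{2n+4}<\alpha+\alpha^{1-p}+x_{2n+2}/\alpha^{p+q}=a\,x_{2n+2}+b$ with $a=\alpha^{-(p+q)}$ and $b=\alpha^{1-p}+\alpha$.

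Finally I would solve the recurrence. Since $\alpha>1$ we have $0<a<1$; writing $u_m=x_{2m+2}$, the inequality $u_{m+1}\le a\,u_m+b$ gives by induction $u_m\le a^m x_2+\tfrac{b}{1-a}(1-a^m)$, which is the asserted estimate and shows boundedness as $m\to\infty$ (the bound tends to $b/(1-a)$). One must only anchor the induction at the first index for which all terms used in the single-step estimate already exceed $\alpha$. The odd-indexed bound for $x_{2n+3}$ is obtained verbatim with $x_2$ replaced by $x_3$, and the two $y$-estimates follow from the symmetric computation, which interchanges the roles of $p$ and $q$ and so replaces $b$ by $c=\alpha^{1-q}+\alpha$.
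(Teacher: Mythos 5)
Your overall architecture matches the paper's: positivity gives $x_n,y_n>\alpha$ for $n\ge 1$, hence the lower bounds and persistence, and the upper bounds come from an affine one-step inequality with ratio $a=\alpha^{-(p+q)}$ along the even- and odd-indexed subsequences, solved explicitly. The derivation of that one-step inequality is genuinely different, though. The paper never substitutes one equation of \eqref{E:pt1} into the other: it uses only $y_{n-1}^p\le y_{n-1}$ (valid because $y_{n-1}>\alpha>1$ and $p\le 1$) to majorize the solution by the coupled linear comparison system $s_n=\alpha+\alpha^{-p}t_{n-1}$, $t_n=\alpha+\alpha^{-q}s_{n-1}$, and only afterwards uncouples it. You substitute first and then tame the resulting exponent $pq$ via subadditivity of $t\mapsto t^p$ together with the auxiliary inequality $x^{pq}/\alpha^{pq}\le x/\alpha^{q}$; both of those steps are correct and they reproduce the same $a$, $b$, $c$. (They are also avoidable: since $y_{2n+3}>1$ and $p\le 1$ you could simply write $y_{2n+3}^p\le y_{2n+3}$ at the outset, which is in effect what the paper does.)

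The genuine difficulty is the anchoring of the even-indexed recursion, which you flag in your last sentence but do not resolve. Your one-step estimate $x_{2n+4}\le a\,x_{2n+2}+b$ uses $x_{2n}>\alpha$, which for $n=0$ is the assertion $x_0>\alpha$ --- false in general, since $x_0$ is an arbitrary positive initial value. This is not a removable technicality: if $x_0$ is taken very small then $y_3=\alpha+x_2^q/x_0^q$, and hence $x_4=\alpha+y_3^p/y_1^p$, become arbitrarily large while $a\,x_2+b$ stays bounded, so the inequality $x_4\le a\,x_2+b$ (the case $n=1$ of the bound you are asked to prove) genuinely fails. Re-anchoring "at the first index for which all terms used already exceed $\alpha$" forces the base case to $x_4$, and the induction then yields $x_{2n+2}\le a^{n-1}x_4+\tfrac{b}{1-a}(1-a^{n-1})$ for $n\ge 1$ --- a correct boundedness statement, but not the one in the theorem. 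The same remark applies to $y_{2n+2}$ through $y_0$; the odd-indexed bounds for $x_{2n+3}$ and $y_{2n+3}$ involve only denominators with index $\ge 1$ and go through for all $n\ge 0$. For what it is worth, the paper's own proof meets the same obstruction at the uncoupling step, where $s_{n+2}=as_n+b$ is asserted for $n\ge 2$ although $t_3$ is an initial datum equal to $y_3=\alpha+x_2^q/x_0^q$ rather than $\alpha+\alpha^{-q}s_2$; so the even-indexed estimates as stated require either the extra hypothesis $x_0,y_0>\alpha$ or an anchor at index $4$.
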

\begin{proof}
Assume that $\alpha>1$ and  $\{(x_n,y_n)\}_{n=-2}^\infty$ is a positive solution of the system \eqref{E:pt1}. Obviously, from  \eqref{E:pt1} we imply
\begin{equation}\label{E:pt24}
 x_n,y_n >\alpha\   \text{for all}\  n\geq 1.
\end{equation}
By combining \eqref{E:pt1} and \eqref{E:pt24}, we obtain 
\begin{equation}\label{E:pt25}
\begin{aligned}
x_{n }&=\alpha+\dfrac{y_{n-1}^p}{y_{n-3}^p}<\alpha+\dfrac{1}{\alpha^p}y_{n-1}^p<\alpha+\dfrac{1}{\alpha^p}y_{n-1},\\
y_{n }&=\alpha+ \dfrac{x_{n-1}^q}{x_{n-3}^q}<\alpha+\dfrac{1}{\alpha^q}x_{n-1}^q<\alpha+\dfrac{1}{\alpha^q}x_{n-1},
 \end{aligned}
\end{equation}
for all $n\geq 4$.\\
Assume that $\{s_n, t_n\}$ is a solution of the following system
\begin{equation}\label{E:pt26}
%\begin{aligned}
s_{n } = \alpha+\dfrac{1}{\alpha^p}t_{n-1},\ t_{n } = \alpha+\dfrac{1}{\alpha^q}s_{n-1},\ \text{for all}\  n\geq 4,
%\end{aligned}
\end{equation}
 such that 
 \begin{equation}\label{E:pt27}
 %\begin{aligned}
 s_{i } = x_{i},\ t_{i } = y_{i}, i=1, 2, 3.
 %\end{aligned}
 \end{equation}
Now, we use induction to prove that
 \begin{equation}\label{E:pt28}
%\begin{aligned}
x_{n}<s_{n},\ y_{n }<t_{n}, \ \text{for all}\   n\geq 4.
%\end{aligned}
\end{equation}
Clearly, \eqref{E:pt28} is true for $n=4$, suppose that it is true for $n=k>4$. Then, from  \eqref{E:pt25}, we get 
\begin{equation}\label{E:pt29}
\begin{aligned}
x_{k+1 }& <\alpha+\dfrac{1}{\alpha^p}y_{k}<\alpha+\dfrac{1}{\alpha^p}t_{k}=s_{k+1 },\\
y_{k+1 }&<\alpha+\dfrac{1}{\alpha^q}x_{k} <\alpha+\dfrac{1}{\alpha^q}s_{k}=t_{k+1}.
\end{aligned}
\end{equation}
Hence, \eqref{E:pt28} is proven.\\
From \eqref{E:pt26} and \eqref{E:pt27}, we get
  \begin{equation}\label{E:pt29}
  %\begin{aligned}
  s_{n+2} =\dfrac{1}{\alpha^{p+q}}s_{n}+ \alpha^{1-p}+\alpha,\ t_{n+2} = \dfrac{1}{\alpha^{p+q}}t_{n}+ \alpha^{1-q}+\alpha,\ n\geq 2,
  %\end{aligned}
  \end{equation}
for simplicity, let $a=\dfrac{1}{\alpha^{p+q}}$, $b= \alpha^{1-p}+\alpha$ and  $c=\alpha^{1-q}+\alpha$. Then \eqref{E:pt29} turns into
  \begin{equation}\label{E:pt30}
 s_{n+2} =as_{n}+ b,\ t_{n+2} = at_{n}+ c,\ n\geq 2,
 \end{equation}
Solve \eqref{E:pt30}, we obtain
 \begin{equation}\label{E:pt31}
s_{2n+2} =x_2a^{n}+ \dfrac{b}{1-a}(1-a^n),\ s_{2n+3} =x_3a^{n}+ \dfrac{b}{1-a}(1-a^n),\ \text {for all}\  n\geq 0,
\end{equation}
and 
\begin{equation}\label{E:pt32}
t_{2n+2} =y_2a^{n}+ \dfrac{c}{1-a}(1-a^n),\ t_{2n+3} =y_3a^{n}+ \dfrac{c}{1-a}(1-a^n),\ \text {for all}\  n\geq 0.
\end{equation}
Then, from \eqref{E:pt24}, \eqref{E:pt28}, \eqref{E:pt31} and \eqref{E:pt32}, it follows that for all $n\geq 0$, we have 
\begin{equation}\label{E:pt33}
\begin{aligned}
\alpha&< x_{2n+2}\leq  x_2a^{n}+ \dfrac{b}{1-a}(1-a^n),\\
 \alpha&< x_{2n+3} \leq x_3a^{n}+ \dfrac{b}{1-a}(1-a^n),
\end{aligned} 
\end{equation}
and 
\begin{equation}\label{E:pt34}
\begin{aligned}
\alpha&< y_{2n+2} \leq y_2a^{n}+ \dfrac{c}{1-a}(1-a^n),\\
 \alpha&< y_{2n+3} \leq y_3a^{n}+ \dfrac{c}{1-a}(1-a^n).
\end{aligned} 
\end{equation}
The proof is complete.
\end{proof}
\section{Global behavior of system \eqref{E:pt1}}
In the next theorem,  we will show that the unique positive equilibrium is global attractor.
\begin{theorem}\label{T:dl2}
Suppose that $\alpha >1$, $0<p, q\leq 1$. Then every  positive solution of system \eqref{E:pt1} converges to the equilibrium point $(\bar{x},\bar{y})=(\alpha+1,\alpha+1)$ as $n\to\infty$.
\end{theorem}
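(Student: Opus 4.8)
The plan is to combine the boundedness and persistence already proved for this regime (Theorem~\ref{T:dl1}) with a $\limsup$/$\liminf$ comparison. Since $\alpha>1$ and $0<p,q\le1$ force every positive solution to satisfy $x_n,y_n>\alpha$ and to stay bounded above, the four quantities
\begin{equation*}
\ell_x=\liminf_{n\to\infty}x_n,\quad L_x=\limsup_{n\to\infty}x_n,\quad \ell_y=\liminf_{n\to\infty}y_n,\quad L_y=\limsup_{n\to\infty}y_n
\end{equation*}
are finite and satisfy $\alpha\le\ell_x\le L_x<\infty$ and $\alpha\le\ell_y\le L_y<\infty$; in particular all four are strictly positive. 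It therefore suffices to show $\ell_x=L_x=\ell_y=L_y=\alpha+1$, because then $\{x_n\}$ and $\{y_n\}$ both converge, say to $X$ and $Y$, and letting $n\to\infty$ in \eqref{E:pt1} forces $X=\alpha+Y^{p}/Y^{p}=\alpha+1$ and $Y=\alpha+1$.

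First I would apply $\limsup$ and $\liminf$ to the two recurrences. As $\{y_{n-2}\}$ is a shift of $\{y_n\}$ it has the same extreme limits, so $\limsup_n y_n^{p}/y_{n-2}^{p}\le L_y^{p}/\ell_y^{p}$ and $\liminf_n y_n^{p}/y_{n-2}^{p}\ge\ell_y^{p}/L_y^{p}$, and symmetrically for the $x$-ratio. This produces the four estimates
\begin{equation*}
L_x\le\alpha+\Big(\tfrac{L_y}{\ell_y}\Big)^{p},\qquad \ell_x\ge\alpha+\Big(\tfrac{\ell_y}{L_y}\Big)^{p},\qquad L_y\le\alpha+\Big(\tfrac{L_x}{\ell_x}\Big)^{q},\qquad \ell_y\ge\alpha+\Big(\tfrac{\ell_x}{L_x}\Big)^{q}.
\end{equation*}

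Next I would introduce the oscillation ratios $R_x=L_x/\ell_x\ge1$ and $R_y=L_y/\ell_y\ge1$. Dividing the first estimate by the second and writing $s=R_y^{\,p}\ge1$ gives
\begin{equation*}
R_x\le\frac{\alpha+R_y^{\,p}}{\alpha+R_y^{-p}}=\frac{s(\alpha+s)}{\alpha s+1}=:F(s),
\end{equation*}
and symmetrically $R_y\le F(R_x^{\,q})$. A routine computation shows that $F$ is increasing on $[1,\infty)$ and that $s-F(s)=(\alpha-1)s(s-1)/(\alpha s+1)\ge0$, with equality only at $s=1$; this is exactly the point at which the hypothesis $\alpha>1$ is used. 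Since $R_y^{\,p}\le R_y$ and $F$ is increasing, $R_x\le F(R_y^{\,p})\le F(R_y)\le R_y$, and likewise $R_y\le R_x$, so $R_x=R_y=:R$. Substituting back, $R\le F(R^{\,p})\le F(R)\le R$ forces $F(R)=R$ and hence $R=1$.

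Consequently $L_x=\ell_x$ and $L_y=\ell_y$, the sequences converge, and the limit computation above identifies the limit with $(\alpha+1,\alpha+1)$, completing the argument. The main obstacle is the borderline case $p=q=1$: there the naive chain $R_x\le R_y^{\,p}\le R_x^{\,pq}=R_x$ collapses and gives no information, so the proof must rely on the \emph{strict} inequality $F(s)<s$ for $s>1$ furnished by $\alpha>1$, which is precisely what excludes a nontrivial gap between $\limsup$ and $\liminf$.
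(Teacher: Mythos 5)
Your argument is correct and follows essentially the same route as the paper: both proofs apply $\limsup$/$\liminf$ to the two recurrences (using the boundedness and persistence from Theorem~\ref{T:dl1}), use $0<p,q\le 1$ to compare the $p$-th power of a ratio with the ratio itself, and exploit $\alpha>1$ to force $\limsup=\liminf$. The only difference is in how the four resulting inequalities are combined --- the paper cross-multiplies and adds them to reach $(\alpha-1)(L_2-l_2+L_1-l_1)\le 0$, whereas you divide them and analyze the oscillation ratios through the increasing function $F$; both closings are valid.
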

\begin{proof}
Let  $$l_1=\underset{n\to\infty}{\lim\inf}\ x_n,\  l_2=\underset{n\to\infty}{\lim\inf}\ y_n, $$
$$L_1=\underset{n\to\infty}{\lim\sup}\ x_n,\  L_2=\underset{n\to\infty}{\lim\sup}\ y_n. $$
Obviously, $1<l_1\leq L_1$ and $1<l_2\leq L_2$. From system \eqref{E:pt1} we indicate that 
\begin{equation}\label{E:pt35} 
l_1\geq \alpha+ \dfrac{l_2^p}{L_2^p}\geq \alpha+ \dfrac{l_2 }{L_2 },\ l_2\geq \alpha+ \dfrac{l_1 }{L_1 },\ L_1\leq \alpha+ \dfrac{L_2 }{l_2 },\ L_2\leq \alpha+ \dfrac{L_1 }{l_1 }.
\end{equation}
From \eqref{E:pt35}, we get 
\begin{equation}\label{E:pt36} 
l_1L_2\geq \alpha L_2+ l_2, 
\end{equation}
\begin{equation}\label{E:pt37} 
  l_2L_1\geq \alpha L_1+ l_1, 
\end{equation}
\begin{equation}\label{E:pt38} 
  l_2L_1\leq \alpha l_2+ L_2, 
\end{equation}
\begin{equation}\label{E:pt39} 
 l_1L_2\leq \alpha l_1+ L_1.
\end{equation}
From \eqref{E:pt36} and \eqref{E:pt39} imply that
\begin{equation}\label{E:pt40} 
 \alpha L_2+ l_2\leq \alpha l_1+ L_1.
\end{equation}
From \eqref{E:pt37} and \eqref{E:pt38} indicate that 
\begin{equation}\label{E:pt41} 
\alpha L_1+ l_1\leq \alpha l_2+ L_2.
\end{equation}
From \eqref{E:pt40} and  \eqref{E:pt41}, we obtain
\begin{equation}\label{E:pt42} 
\alpha L_2+ l_2 - \alpha l_2 - L_2\leq \alpha l_1+ L_1-\alpha L_1 - l_1, 
\end{equation}
which is equivalent to 
 \begin{equation}\label{E:pt43} 
 (\alpha-1)(L_2- l_2 + L_1-  l_1)\leq 0.
 \end{equation}
Since $\alpha-1>0$, so we can infer from  \eqref{E:pt43} that $(L_2- l_2 + L_1-  l_1)\leq 0$. But  $L_2- l_2\geq 0$ and $L_1-  l_1\geq 0$, so $(L_2- l_2 + L_1-  l_1)\geq 0$. Hence, $L_2 -l_2 = 0$ and $L_1-  l_1 = 0$, so $L_2 =l_2$ and $L_1=l_1$. Back to  \eqref{E:pt36} and  \eqref{E:pt38}, we get $l_1\geq \alpha +1$ and $L_1\leq \alpha+1$, so $\underset{n\to\infty}{\lim }\ x_n = L_1=l_1= \alpha+1$. Similarly, we imply $\underset{n\to\infty}{\lim }\ y_n = L_2=l_2= \alpha+1$. We complete the proof of the theorem.
\end{proof}
\begin{theorem}\label{T:dl3}
	Assume that $\alpha >1$ and $0<p, q \leq 1$, then the unique positive equilibrium $(\bar x, \bar y)=(\alpha +1, \alpha +1)$ of system \eqref{E:pt1} is locally asymptotically stable.
\end{theorem}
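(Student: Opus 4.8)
The plan is to apply the linearization criterion stated in Section~1: I will compute the Jacobian of the map $F=(f,g)$ at the equilibrium $(\bar x,\bar y)=(\alpha+1,\alpha+1)$, determine its characteristic polynomial, and verify that every eigenvalue has modulus strictly less than one.

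First I would introduce the perturbations $\xi_n=x_n-(\alpha+1)$, $\eta_n=y_n-(\alpha+1)$ and linearize \eqref{E:pt1} about the equilibrium. Since $f(u,v)=\alpha+(u/v)^p$ satisfies $f_u=p/(\alpha+1)$ and $f_v=-p/(\alpha+1)$ at $u=v=\alpha+1$, and likewise $g_u=q/(\alpha+1)$, $g_v=-q/(\alpha+1)$, setting $P=p/(\alpha+1)$ and $Q=q/(\alpha+1)$ the Jacobian $A$ with respect to $X_n=(x_n,x_{n-1},x_{n-2},y_n,y_{n-1},y_{n-2})^T$ is the $6\times6$ matrix whose only nonzero entries are the two shift blocks together with $A_{1,4}=P$, $A_{1,6}=-P$, $A_{4,1}=Q$, $A_{4,3}=-Q$. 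Equivalently, the linearized system reads $\xi_{n+1}=P(\eta_n-\eta_{n-2})$ and $\eta_{n+1}=Q(\xi_n-\xi_{n-2})$.

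Next I would obtain the characteristic equation. Substituting $\xi_n=a\lambda^n$, $\eta_n=b\lambda^n$ into the linearized recursions gives $a\lambda^3=Pb(\lambda^2-1)$ and $b\lambda^3=Qa(\lambda^2-1)$; multiplying these and cancelling $ab$ (a short check rules out the degenerate cases $a=0$ or $b=0$, since either forces $b=0$ or $a=0$ as well) yields
\begin{equation*}
\lambda^6 = PQ\,(\lambda^2-1)^2, \qquad PQ=\frac{pq}{(\alpha+1)^2}.
\end{equation*}
As a consistency check one can confirm $\det A=-PQ$ directly, which matches the constant term of the monic degree-six polynomial $\lambda^6-PQ(\lambda^2-1)^2$, so this is indeed the characteristic polynomial of $A$.

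Finally I would show all six roots lie inside the unit disk. The hypotheses $0<p,q\le1$ and $\alpha>1$ give $pq\le1$ and $(\alpha+1)^2>4$, hence $PQ<\tfrac14$. Suppose some root had $|\lambda|=r\ge1$. Using $|\lambda^2-1|\le r^2+1\le 2r^2$ for $r\ge1$, the characteristic relation forces
\begin{equation*}
r^6 = PQ\,|\lambda^2-1|^2 \le 4PQ\,r^4,
\end{equation*}
so $r^2\le 4PQ<1$, contradicting $r\ge1$. Thus $|\lambda|<1$ for every eigenvalue, and the linearization theorem of Section~1 yields that $(\alpha+1,\alpha+1)$ is locally asymptotically stable. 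The routine part is the Jacobian computation; the one place that needs care is deriving and trusting the compact form $\lambda^6-PQ(\lambda^2-1)^2$ of the characteristic polynomial — in particular checking that the substitution captures all six eigenvalues rather than a proper subset — after which the modulus estimate via $PQ<\tfrac14$ is short and is the crux in which the hypothesis $\alpha>1$ is used essentially.
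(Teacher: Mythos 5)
Your proof is correct, and it takes a genuinely different route from the paper's. The paper never touches the characteristic polynomial: it conjugates $A$ by the diagonal matrix $D=\mathrm{diag}(1,1-2\epsilon,1-3\epsilon,1,1-2\epsilon,1-3\epsilon)$ and bounds the spectral radius by $\Vert DAD^{-1}\Vert_{\infty}$, which for $\epsilon<\min\bigl\{\tfrac{\alpha+1-2p}{3(\alpha+1)},\tfrac{\alpha+1-2q}{3(\alpha+1)}\bigr\}$ is strictly less than $1$; note that this argument separately needs $2p<\alpha+1$ and $2q<\alpha+1$. You instead derive the characteristic equation $\lambda^{6}=PQ(\lambda^{2}-1)^{2}$ and rule out roots of modulus $r\ge 1$ via $r^{6}\le 4PQ\,r^{4}$, which only needs the single condition $4PQ<1$, i.e.\ $4pq<(\alpha+1)^{2}$ --- a strictly weaker sufficient condition than the paper's (and of course implied by $\alpha>1$, $0<p,q\le 1$). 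Your identification of the characteristic polynomial is in fact correct (a direct cofactor expansion of $\det(\lambda I-A)$ gives exactly $\lambda^{6}-PQ(\lambda^{2}-1)^{2}$), though as stated your justification is a little loose: matching the degree and the constant term is only a consistency check. The clean way to close that gap, which your parenthetical eigenvector remark essentially already contains, is to note that any eigenvector $(a_1,a_2,a_3,b_1,b_2,b_3)$ for $\lambda\ne 0$ forces $a_i=a_1/\lambda^{i-1}$, $b_i=b_1/\lambda^{i-1}$ with $a_1b_1\ne 0$, whence every nonzero eigenvalue satisfies $\lambda^{6}=PQ(\lambda^{2}-1)^{2}$, and $\lambda=0$ is harmless for the stability conclusion (and in fact excluded since $\det A=-PQ\ne 0$). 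With that phrasing your argument is complete; it is shorter than the paper's, yields an explicit spectral equation that could be reused to locate the exact stability boundary, while the paper's norm argument avoids all determinant computations at the cost of an auxiliary parameter $\epsilon$ and a slightly stronger implicit hypothesis.
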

\begin{proof}
Set 
\begin{equation*}
\begin{aligned}
u^{(1)}_n&=x_n, u^{(2)}_n=x_{n-1}, u^{(3)}_n=x_{n-2}, u^{(4)}_n=y_{n }, u^{(5)}_n=y_{n-1}, u^{(6)}_n=y_{n-2},\\
U_n&=\left(u^{(1)}_n, u^{(2)}_n, u^{(3)}_n, u^{(4)}_n, u^{(5)}_n, u^{(6)}_n\right)^T. 
\end{aligned}
\end{equation*}
Then the linearized equation of system \eqref{E:pt1} about the equilibrium point $(\bar x, \bar y)=(\alpha +1, \alpha +1)$ is
$$U_{n+1}=AU_n,$$
where 
\begin{equation*}
\begin{aligned}
U_{n+1}&=\left(u^{(1)}_{n+1}, u^{(2)}_{n+1}, u^{(3)}_{n+1}, u^{(4)}_{n+1}, u^{(5)}_{n+1}, u^{(6)}_{n+1}\right)^T\\
&=\left(\alpha+\dfrac{(u^{(4)}_n)^p}{(u^{(6)}_n)^p}, u^{(1)}_n, u^{(2)}_n, \alpha+\dfrac{(u^{(1)}_n)^q}{(u^{(3)}_n)^q}, u^{(4)}_n, u^{(5)}_n\right)^T, 
\end{aligned}
\end{equation*}
and $A$ is the Jacobian matrix, which is determined by
\begin{equation*}
A  = 
\begin{pmatrix}
0 & 0 & 0 & \dfrac{p}{\alpha+1}&0&-\dfrac{p}{\alpha+1} \\
1 & 0 & 0 & 0&0&0 \\
0 &1 & 0 & 0&0&0 \\
\dfrac{q}{\alpha+1} & 0 & -\dfrac{q}{\alpha+1} & 0&0&0 \\
0 & 0 & 0 & 1&0&0 \\
0 &0 & 0 & 0&1&0 
\end{pmatrix}.
\end{equation*}
Let $\lambda_1, \lambda_2,\ldots,\lambda_6$  denote the eigenvalues of matrix $A$ and let
$$D=diag(d_1, d_2,\ldots, d_6)$$
 be a diagonal matrix in which 
 \begin{equation}\label{E:pt440} 
  d_1=d_4=1,  d_2=d_5=1-2\epsilon, d_3=d_6=1-3\epsilon,
 \end{equation}
  % Since $\alpha >1$ and $0<p, q \leq 1$, we can choose a positive number $\epsilon$ such that 
%\begin{equation}\label{E:pt44} 
% 0<\epsilon<\min\left\{\dfrac{\alpha+1-2p}{3(\alpha+1)},\ \dfrac{\alpha+1-2q}{3(\alpha+1)}\right\}
%\end{equation}
with $0<\epsilon<\dfrac{1}{3}$, so $\det D = (1-2\epsilon)(1-3\epsilon)>0$. Therefore, $D$ is an invertible matrix. Computing matrix $DAD^{-1}$, we have
\begin{equation*}
DAD^{-1} = 
\begin{pmatrix}
0 & 0 & 0 & \dfrac{d_1}{d_4}\dfrac{p}{\alpha+1}&0&-\dfrac{d_1}{d_6}\dfrac{p}{\alpha+1} \\
\dfrac{d_2}{d_1} & 0 & 0 & 0&0&0 \\
0 &\dfrac{d_3}{d_2} & 0 & 0&0&0 \\
\dfrac{d_4}{d_1}\dfrac{q}{\alpha+1} & 0 & -\dfrac{d_4}{d_3}\dfrac{q}{\alpha+1} & 0&0&0 \\
0 & 0 & 0 & \dfrac{d_5}{d_4}&0&0 \\
0 &0 & 0 & 0&\dfrac{d_6}{d_5}&0 
\end{pmatrix}.
\end{equation*}
It is well known that $A$ has the same eigenvalues as $DAD^{-1}$, we have that 
\begin{equation*}
\begin{aligned}
\max|\lambda_i|&\leq \Vert DAD^{-1} \Vert_{\infty}\\  
&=\max\left\{\dfrac{d_2}{d_1},\ \dfrac{d_3}{d_2},\ \dfrac{d_5}{d_4}, \ \dfrac{d_6}{d_5}, \ \dfrac{d_1}{d_4}\dfrac{p}{\alpha+1}+\dfrac{d_1}{d_6}\dfrac{p}{\alpha+1},\  \dfrac{d_4}{d_1}\dfrac{q}{\alpha+1}+\dfrac{d_4}{d_3}\dfrac{q}{\alpha+1}      \right\}.   
\end{aligned}
\end{equation*}
From \eqref{E:pt440}, we imply that 
\begin{equation}\label{E:pt45}
\dfrac{d_2}{d_1}<1,\ \dfrac{d_3}{d_2}<1,\ \dfrac{d_5}{d_4}<1, \ \dfrac{d_6}{d_5}<1.
\end{equation}
Now, we consider%Furthermore, 
\begin{equation}\label{E:pt46}
\begin{aligned}
 \dfrac{d_1}{d_4}\dfrac{p}{\alpha+1}+\dfrac{d_1}{d_6}\dfrac{p}{\alpha+1}&=\dfrac{p}{\alpha+1} +\dfrac{1}{(1-3\epsilon)}\dfrac{p}{(\alpha+1)}\\
 &=\dfrac{p}{\alpha+1}\dfrac{(2-3\epsilon)}{(1-3\epsilon)}<\dfrac{2p}{(\alpha+1)(1-3\epsilon)}<1.\\
 \end{aligned}
 \end{equation}
 Solve \eqref{E:pt46}, we obtain 
 \begin{equation}\label{E:pt47}
 \epsilon<\dfrac{\alpha+1-2p}{3(\alpha+1)}
 \end{equation}
 Similarly, if $\epsilon$ satisfies
 \begin{equation}\label{E:pt48}
 \epsilon<\dfrac{\alpha+1-2q}{3(\alpha+1)}
 \end{equation}
 then 
 \begin{equation}\label{E:pt49}
 \dfrac{d_4}{d_1}\dfrac{q}{\alpha+1}+\dfrac{d_4}{d_3}\dfrac{q}{\alpha+1}<1.
 \end{equation}
 From \eqref{E:pt45}, \eqref{E:pt46}, \eqref{E:pt47}, \eqref{E:pt48} and \eqref{E:pt49}, we can choose $\epsilon$ such that 
 \begin{equation}\label{E:pt50} 
  0<\epsilon<\min\left\{\dfrac{\alpha+1-2p}{3(\alpha+1)},\ \dfrac{\alpha+1-2q}{3(\alpha+1)}\right\},
 \end{equation}
 then
 \begin{equation*}
 \begin{aligned}
 \max|\lambda_i| \leq \Vert DAD^{-1} \Vert_{\infty}<1.
 \end{aligned}
 \end{equation*}
 It means that all eigenvalues of $A$ lie inside the unit disk. This implies that the unique positive equilibrium $(\bar x, \bar y)=(\alpha +1, \alpha +1)$ of system \eqref{E:pt1} is locally asymptotically stable.
 Thus, the proof is completed.
\end{proof}
Combining Theorem  \ref{T:dl2} and   Theorem  \ref{T:dl3}, we get the next theorem.
\begin{theorem}\label{T:dl4}
	If $\alpha >1$ and  $0<p, q\leq 1$, then the unique positive equilibrium point $(\bar{x},\bar{y})=(\alpha+1,\alpha+1)$ of system \eqref{E:pt1} is globally asymptotically stable.
\end{theorem}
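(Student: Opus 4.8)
The plan is to recognize that this statement is a pure combination result: by the standard definition, an equilibrium point is globally asymptotically stable precisely when it is simultaneously locally asymptotically stable and a global attractor (that is, every admissible solution converges to it). Since the two immediately preceding theorems establish exactly these two properties under the very same hypotheses $\alpha > 1$ and $0 < p, q \leq 1$, the entire proof reduces to assembling them, and no new analytic estimate is needed.

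First I would invoke Theorem \ref{T:dl3}, which asserts that under $\alpha > 1$ and $0 < p, q \leq 1$ the unique positive equilibrium $(\bar{x}, \bar{y}) = (\alpha+1, \alpha+1)$ is locally asymptotically stable. This disposes of the local-stability half of the definition, and it is the part that carries the linearization and the $\Vert DAD^{-1}\Vert_\infty < 1$ spectral-radius argument. Next I would invoke Theorem \ref{T:dl2}, which asserts that under the identical hypotheses every positive solution of system \eqref{E:pt1} converges to $(\alpha+1, \alpha+1)$ as $n \to \infty$; this is exactly the statement that the equilibrium is a global attractor on the positive cone.

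Putting the two together — local asymptotic stability from Theorem \ref{T:dl3} and global attractivity from Theorem \ref{T:dl2} — yields global asymptotic stability of $(\bar{x}, \bar{y}) = (\alpha+1, \alpha+1)$, which completes the argument. The only point that warrants an explicit check is that the hypotheses of the two theorems coincide, namely that both require $\alpha > 1$ together with $0 < p, q \leq 1$; since they do, the conclusions apply simultaneously and no strengthening of either hypothesis is necessary.

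In short, I expect there to be no genuine obstacle at this stage: the boundedness and persistence estimates of Theorem \ref{T:dl1}, the $\liminf$/$\limsup$ squeezing that forces convergence in Theorem \ref{T:dl2}, and the diagonal-scaling estimate of Theorem \ref{T:dl3} together already contain all the difficulty. The present theorem is the formal synthesis of those results, and the proof is correspondingly short.
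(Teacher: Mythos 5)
Your proposal matches the paper exactly: the paper proves this theorem with the single remark that it follows by combining Theorem \ref{T:dl2} (global attractivity) with Theorem \ref{T:dl3} (local asymptotic stability), which is precisely your synthesis. No difference in approach and no gap.
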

 \section{Rate of convergence}
 In this section we give the rate of convergence of a solution that converges to the equilibrium point  $ (\bar{x},\ \bar{y})=(\alpha+1, \alpha+1)$ of the systems \eqref{E:pt1} for $\alpha >1$ and  $0<p, q\leq 1$. More results on the rate of convergence of solutions for some two-dimensional difference equations systems have been obtained in \cite{Kunu1} and \cite{Kunu2}.\\
 \indent
 The following results give the rate of convergence of solutions of a system of difference equations
 \begin{equation}\label{E:pt3.1}
 \mathbf{V}_{n+1}=[A+B(n)]\mathbf{V}_n
 \end{equation}
 where $\mathbf{V}_n$ is a $k$-dimensional vector, $A\in \mathbb{C}^{k\times k}$ is a constant matrix, and $B:\ \mathbb{Z}^+\longrightarrow \mathbb{C}^{k\times k}$ is a matrix function satisfying 
 \begin{equation}\label{E:pt3.2}
 \Vert B(n)\Vert \to 0\ \text{when}\ n\to\ \infty,
 \end{equation}
 where $\Vert . \Vert$ denotes any matrix norm which is associated with the vector norm;  $\Vert . \Vert$ also denotes the Euclidean norm in $\mathbb{R}^2$ given by 
 \begin{equation}\label{E:pt3.3}
  \Vert (x, y)\Vert =\sqrt{x^2+y^2}.
 \end{equation}
 \begin{theorem}(Perron's Theorem, \cite{r15})\label{T:dl3.1}
 	Assume that condition \eqref{E:pt3.2} holds. If $\mathbf{V}_n$ is a solution of system \eqref{E:pt3.1}, then either  $\mathbf{V}_n=0$ for all large $n$, or 
 	\begin{equation*}%\label{E:pt3.4}
 \rho=\underset{n\to\infty}\lim\sqrt[n]{\Vert \mathbf{V}_n\Vert}
 	\end{equation*}
 	or
 	\begin{equation*}%\label{E:pt3.5}
 	 \rho=\underset{n\to\infty}\lim\dfrac{\Vert \mathbf{V}_{n+1}\Vert}{\Vert \mathbf{V}_n\Vert}
 	\end{equation*}
 	exists and $\rho$ is equal to the modulus of one of the eigenvalues of matrix $A$.
 \end{theorem}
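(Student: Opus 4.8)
The statement is Perron's classical theorem on the asymptotics of perturbed linear difference systems, and it is quoted here from \cite{r15}; accordingly, in the paper itself the appropriate course is simply to cite that source. Were I to reconstruct the argument, the plan is to frame everything in terms of the Lyapunov exponent $\rho=\lim_{n\to\infty}\sqrt[n]{\Vert\mathbf{V}_n\Vert}$ and to show that, for any solution that is not eventually zero, this limit exists and coincides with $|\lambda_i|$ for some eigenvalue of $A$. First I would perform a similarity transformation $\mathbf{W}_n=P^{-1}\mathbf{V}_n$ with $J=P^{-1}AP$ in Jordan canonical form; the system becomes $\mathbf{W}_{n+1}=[J+\tilde B(n)]\mathbf{W}_n$ with $\tilde B(n)=P^{-1}B(n)P$, and since $\Vert\tilde B(n)\Vert\le\Vert P^{-1}\Vert\,\Vert B(n)\Vert\,\Vert P\Vert\to 0$ the hypothesis \eqref{E:pt3.2} is preserved while neither $\rho$ nor the eigenvalue moduli change. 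This reduces the problem to the block-diagonal setting organised by the distinct moduli $|\lambda_1|>|\lambda_2|>\cdots$ of the eigenvalues.

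Next I would establish the upper estimate $\limsup_{n\to\infty}\sqrt[n]{\Vert\mathbf{V}_n\Vert}\le\max_i|\lambda_i|$ by combining the spectral-radius formula $\lim_{n\to\infty}\Vert A^n\Vert^{1/n}=\max_i|\lambda_i|$ with a discrete Gronwall estimate that absorbs the decaying perturbation $\tilde B(n)$. The hard part, as in every Perron-type result, is the matching lower bound together with the assertion that $\rho$ lands exactly on an eigenvalue modulus rather than somewhere strictly between two consecutive moduli. The standard device is an invariant-splitting (exponential dichotomy) argument: I would decompose $\mathbb{C}^k$ into the subspaces associated with eigenvalues of a common modulus, fix a target modulus $|\lambda_i|$, pass to the weighted norm $\Vert\mathbf{V}_n\Vert/|\lambda_i|^n$, and show that once $n$ is large enough that $\Vert\tilde B(n)\Vert$ is small relative to the spectral gaps, the perturbed recursion is a contraction on the corresponding weighted sequence space. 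A Banach fixed-point construction then produces, for each modulus, a nontrivial solution realising that exact exponent and shows that every solution is asymptotic to one of these; this pins the Lyapunov exponent to some $|\lambda_i|$ and forces the limit to exist.

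Finally, to obtain the equivalent ratio formulation $\rho=\lim_{n\to\infty}\Vert\mathbf{V}_{n+1}\Vert/\Vert\mathbf{V}_n\Vert$, I would note that once a nonzero solution is known to be asymptotic to an eigensolution of modulus $|\lambda_i|$, the consecutive ratios converge directly to $|\lambda_i|$, while the standard ratio-to-root comparison shows the two expressions for $\rho$ agree whenever either exists. The main obstacle throughout is isolating the exact growth rate, i.e.\ excluding the intermediate Lyapunov exponents that a decaying perturbation might a priori create, which is precisely what the dichotomy and contraction step is designed to rule out.
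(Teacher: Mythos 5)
The paper offers no proof of this theorem: it is imported verbatim as Perron's Theorem with a citation to Pituk \cite{r15}, exactly as you recommend. Your reconstruction sketch (Jordan reduction, Gronwall upper bound, dichotomy/contraction argument to pin the Lyapunov exponent to an eigenvalue modulus, then the ratio--root comparison) is a faithful outline of the standard proof, but since the paper itself only cites the result, there is nothing to compare it against and your citation-first approach matches the paper's.
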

 \begin{theorem}
 	Assume that $\{(x_n,y_n)\}$ is a solution of the system \eqref{E:pt1} such that $\underset{n\to\infty}\lim x_n=\bar{x}, \underset{n\to\infty}\lim y_n =\bar{y}$. Then the error vector
 	\begin{equation*}
 	\mathbf{e}_n=
 	\begin{pmatrix}
 	e^1_n\\ 
 	e^1_{n-1}\\
 	e^1_{n-2}\\
 	e^2_n\\ 
 	e^2_{n-1}\\ 
 	e^2_{n-2}\\
 	\end{pmatrix}
 	=
 	\begin{pmatrix}
 	x_n-\bar{x}\\ 
 	x_{n-1}-\bar{x}\\ 
 	x_{n-2}-\bar{x}\\
 	y_n-\bar{y}\\
 	y_{n-1}-\bar{y}\\ 
 	y_{n-2}-\bar{y}\\
 	\end{pmatrix}
 	\end{equation*}
 	of every solution of \eqref{E:pt1} satisfies both of the following asymptotic relations:%$(x_n, y_n) \ne (\bar x, \bar y)$ 
 	\begin{equation*}%\label{E:pt3.10}
 	\underset{n\to\infty}\lim\sqrt[n]{\Vert \mathbf{e}_n\Vert}=|\lambda_i J_F(\bar x, \bar y)|\ \text{for some} \ i\in\{1,\ 2,\ldots, 6\} 
 	\end{equation*}
 	and 
 	\begin{equation*}%\label{E:pt3.11}
 	\underset{n\to\infty}\lim\dfrac{\Vert \mathbf{e}_{n+1}\Vert}{\Vert \mathbf{e}_{n}\Vert}=|\lambda_i J_F(\bar x, \bar y)|\ \text{for some} \ i\in\{1,\ 2,\ldots, 6\}
 	\end{equation*}
 	where  $|\lambda_i J_F(\bar x, \bar y)|$ is equal to the modulus of one  the eigenvalues of the Jacobian matrix evaluated at the equilibrium $(\bar x, \bar y)$.
 \end{theorem}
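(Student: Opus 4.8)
The plan is to recast the error dynamics of the given solution into the linear perturbed form \eqref{E:pt3.1} and then invoke Perron's Theorem \ref{T:dl3.1}. Writing $e^1_n = x_n - \bar x$ and $e^2_n = y_n - \bar y$, I would first express the increments $e^1_{n+1}$ and $e^2_{n+1}$ as linear combinations of earlier errors with \emph{variable} coefficients, arranged so that the resulting coefficient matrix splits as $A + B(n)$, where $A$ is precisely the Jacobian matrix computed in the proof of Theorem \ref{T:dl3} and $B(n)$ is a remainder matrix tending to zero. Once that representation is in place, both asymptotic relations follow at once from Theorem \ref{T:dl3.1}.

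The core computation is the linearization of the two nonlinear terms. For the $x$-equation I would start from
\begin{equation*}
e^1_{n+1} = x_{n+1} - \bar x = \frac{y_n^p}{y_{n-2}^p} - 1
= \frac{y_n^p - \bar y^p}{y_{n-2}^p} + \frac{\bar y^p - y_{n-2}^p}{y_{n-2}^p},
\end{equation*}
and then apply the mean value theorem to each power difference: there exist $\xi_n$ between $y_n$ and $\bar y$, and $\eta_n$ between $y_{n-2}$ and $\bar y$, with $y_n^p - \bar y^p = p\,\xi_n^{p-1}(y_n - \bar y)$ and $\bar y^p - y_{n-2}^p = -p\,\eta_n^{p-1}(y_{n-2} - \bar y)$. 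This yields $e^1_{n+1} = a_n e^2_n + b_n e^2_{n-2}$ with $a_n = p\,\xi_n^{p-1}/y_{n-2}^p$ and $b_n = -p\,\eta_n^{p-1}/y_{n-2}^p$. Treating the $y$-equation in exactly the same way gives $e^2_{n+1} = c_n e^1_n + d_n e^1_{n-2}$ with the analogous coefficients in $q$. Collecting these together with the trivial identities for the shifted components $e^i_{n}$ produces a $6\times 6$ system $\mathbf{e}_{n+1} = (A + B(n))\mathbf{e}_n$ whose constant part $A$ is the Jacobian of Theorem \ref{T:dl3}.

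The remaining step is to verify the decay condition \eqref{E:pt3.2}. Since by hypothesis $x_n \to \bar x$ and $y_n \to \bar y$, the intermediate points satisfy $\xi_n, \eta_n \to \bar y = \alpha + 1$, so that $a_n \to p\,\bar y^{p-1}/\bar y^p = p/(\alpha+1)$ and $b_n \to -p/(\alpha+1)$, matching exactly the first-row entries of $A$; likewise $c_n, d_n \to \pm q/(\alpha+1)$. Hence every entry of $B(n)$ tends to $0$, so $\Vert B(n)\Vert \to 0$ in any matrix norm, and Perron's Theorem \ref{T:dl3.1} applies to $\mathbf{e}_n$, giving both $\lim_{n\to\infty}\sqrt[n]{\Vert \mathbf{e}_n\Vert}$ and $\lim_{n\to\infty}\Vert \mathbf{e}_{n+1}\Vert/\Vert \mathbf{e}_n\Vert$ equal to the modulus of one eigenvalue of $A = J_F(\bar x,\bar y)$, as claimed. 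I expect the only delicate point to be the bookkeeping showing that the constant matrix $A$ is \emph{literally} the Jacobian of Theorem \ref{T:dl3}, so that the eigenvalues appearing in Perron's conclusion are exactly the $\lambda_i J_F(\bar x,\bar y)$; the analytic content, namely the mean value estimates and the limits of the coefficients, is routine once the convergence hypothesis is used.
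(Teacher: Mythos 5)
Your proposal is correct and follows essentially the same route as the paper: rewrite the error dynamics as $\mathbf{e}_{n+1}=[A+B(n)]\mathbf{e}_n$ with $A=J_F(\bar x,\bar y)$ the Jacobian from Theorem \ref{T:dl3}, check $\Vert B(n)\Vert\to 0$ from the convergence hypothesis, and invoke Perron's Theorem \ref{T:dl3.1}. The only cosmetic difference is that you obtain the variable coefficients via the mean value theorem with intermediate points $\xi_n,\eta_n$ (which makes the identity exact), whereas the paper writes them as derivative-type expressions evaluated along the orbit; both tend to the same entries $\pm p/\bar y$, $\pm q/\bar x$ of $A$.
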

 \begin{proof}
 	We will find a system satisfied by the error terms, which are given as
 	\begin{equation}\label{E:pt3.12}
 	\begin{aligned}
 	x_{n+1}-\bar{x}=&\sum_{i=0}^{2}a_i(x_{n-i}- \bar x)+ \sum_{i=0}^{2}b_i(y_{n-i}- \bar x)\\ 
 		y_{n+1}-\bar{y}=&\sum_{i=0}^{2}c_i(x_{n-i}- \bar x)+ \sum_{i=0}^{2}d_i(y_{n-i}- \bar x)\\ 
 	\end{aligned}
 	\end{equation}
  	Let $ e^1_n=x_n-\bar{x}\ \text{and}\ e^2_n=y_n-\bar{y}$, then system \eqref{E:pt3.12} can be written as following form\\
 	\begin{equation*}
 	\begin{aligned}
 	e^1_{n+1}= &\sum_{i=0}^{2}a_ie^1_{n-i}+ \sum_{i=0}^{2}b_ie^2_{n-i},\\ 
 		e^2_{n+1}= &\sum_{i=0}^{2}c_ie^1_{n-i}+ \sum_{i=0}^{2}d_ie^2_{n-i},\\
 	\end{aligned}
 	\end{equation*}
 	where 
 	\begin{equation*}
 	\begin{aligned}
 	a_0&=a_1=a_2=0, \\
 	b_0& =\dfrac{py_n^{p-1}}{y_{n-2}^p}, b_1=0, b_2= -\dfrac{py_n^{p }}{y_{n-2}^{p+1}},\\
 	c_0&=\dfrac{qx_n^{q-1}}{x_{n-2}^q}, c_1=0, c_2= -\dfrac{qx_n^{q }}{x_{n-2}^{q+1}},  \\
 		d_0&=d_1=d_2=0.
 	\end{aligned}
 	\end{equation*}
 	Taking the limmits of $a_i,\ b_i,\ c_i$ and $ d_i$ as $n\to\infty$ for $i=0, 1, 2$, we obtain
 	\begin{equation*}
 	\begin{aligned}
 	\underset{n\to\infty}\lim a_i&=0,\ \text{for}\ i=0,1,2,\\ \underset{n\to\infty}\lim b_0&=\dfrac{p}{\bar y}, \underset{n\to\infty}\lim b_1=0, \underset{n\to\infty}\lim b_2=-\dfrac{p}{\bar y},  \\
 	\underset{n\to\infty}\lim c_0&=\dfrac{q}{\bar x}, \underset{n\to\infty}\lim c_1=0, \underset{n\to\infty}\lim c_2=-\dfrac{q}{\bar x},  \\
 	\underset{n\to\infty}\lim d_i&=0,\ \text{for}\ i=0,1,2.\\
 	\end{aligned}
 	\end{equation*} 
 	That is
 	\begin{equation*}
 	\begin{aligned}
 	b_0&= \dfrac{p}{\bar y}+\beta_n,\ b_2 =-\dfrac{p}{\bar y}+\gamma_n,\\
 c_0&=\dfrac{q}{\bar x}+\delta_n,\ c_2=-\dfrac{q}{\bar x}+\eta_n,\\  
 	\end{aligned}
 	\end{equation*} 
 	where $ \beta_n\to 0,\ \gamma_n\to 0, \delta_n\to 0$ and $\eta_n \to 0$   as $n\to\infty$.\\
 	\indent
 	Now, we have the following system of the form \eqref{E:pt3.1}:\\
 	$$ \mathbf{e}_{n+1}=[A+B(n)]\mathbf{e}_n, $$
 	where $	\mathbf{e}_n=\left(
 	e^1_n, e^1_{n-1}, e^1_{n-2}, e^2_n, e^2_{n-1}, e^2_{n-2}\right)^T$ and 
 	%\begin{equation*}
 	$$A=J_F(\bar x, \bar y)= 
 	\begin{pmatrix}
 	0 & 0 & 0 & \dfrac{p}{\alpha+1}&0&-\dfrac{p}{\alpha+1} \\
 	1 & 0 & 0 & 0&0&0 \\
 	0 &1 & 0 & 0&0&0 \\
 	\dfrac{q}{\alpha+1} & 0 & -\dfrac{q}{\alpha+1} & 0&0&0 \\
 	0 & 0 & 0 & 1&0&0 \\
 	0 &0 & 0 & 0&1&0 
 	\end{pmatrix},$$
 	%\end{equation*}
 	$$B(n)=	\begin{pmatrix}
 	0 & 0 & 0 & \beta_n&0&\gamma_n \\
 	0 & 0 & 0 & 0&0&0 \\
 	0 &0 & 0 & 0&0&0 \\
 	\delta_n & 0 & \eta_n& 0&0&0 \\
 	0 & 0 & 0 & 0&0&0 \\
 	0 &0 & 0 & 0&0&0 
 	\end{pmatrix},$$
 	and 
 	$$ \Vert B(n)\Vert\to 0\ \text{as}\ n\to\ \infty. $$
 	Thus, the limiting system of error terms can be written as:
 	$$ \begin{pmatrix}
 	e^1_{n+1}\\ 
 	e^1_{n }\\
 	e^1_{n-1}\\
 	e^2_{n+1} \\ 
 	e^2_{n }\\ 
 	e^2_{n-1}\\
 	\end{pmatrix}=
 	\begin{pmatrix}
 	0 & 0 & 0 & \dfrac{p}{\alpha+1}&0&-\dfrac{p}{\alpha+1} \\
 	1 & 0 & 0 & 0&0&0 \\
 	0 &1 & 0 & 0&0&0 \\
 	\dfrac{q}{\alpha+1} & 0 & -\dfrac{q}{\alpha+1} & 0&0&0 \\
 	0 & 0 & 0 & 1&0&0 \\
 	0 &0 & 0 & 0&1&0 
 	\end{pmatrix}
 	\begin{pmatrix}
 	e^1_n\\ 
 	e^1_{n-1}\\
 	e^1_{n-2}\\
 	e^2_n\\ 
 	e^2_{n-1}\\ 
 	e^2_{n-2}\\
 	\end{pmatrix}
 	.$$
 	The system is exactly linearized system of \eqref{E:pt1} evaluated at the equilibrium $ (\bar{x},\ \bar{y})=(\alpha+1,\ \alpha+1)$. From  Theorem \ref{T:dl3.1}, we imply the result.
 \end{proof}
 
 \section{Examples}
 In order to verify our theoretical results and to support our theoretical discussion, we consider several interesting numerical examples. These examples represent different types of qualitative behavior of solutions of the systems \eqref{E:pt1}. All plots in this section are drawn with Matlab.
 \begin{example}\label{vd:1}
 	Let $\alpha = 2, p = 0.6, q = 0.9$.  The system \eqref{E:pt1} can be written as
 	\begin{equation}\label{E:pt211}
 	x_{n+1}=2+\dfrac{y_{n}^{0.6}}{y_{n-2}^{0.6}},\
 	y_{n+1}=2+ \dfrac{x_{n}^{0.9}}{x_{n-2}^{0.9}}.
 	\end{equation}
 Consider \eqref{E:pt211} with initial conditions $x_{-2} = 2.5$,  $x_{-1} = 6$, $x_0=2$, $y_{-2} = 4$, $y_{-1} = 2$ and $y_0 = 5$.
 \end{example}
 \begin{figure}[!ht]
 	\centering
 	\subfigure[Plot of $x_n$ for the system \eqref{E:pt211}]{
 		\includegraphics[height=4.8cm,width=6.4cm]{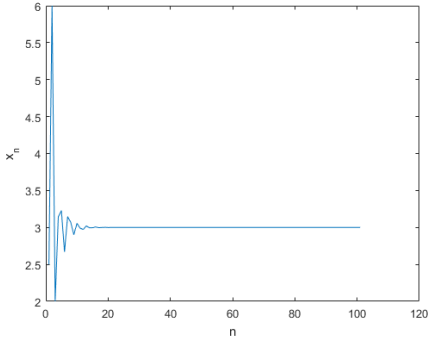}
 		\label{fig:1a}}
 	\hspace{0.2cm}
 	\subfigure[Plot of $y_n$ for the system \eqref{E:pt211}]{
 		\includegraphics[height=4.8cm,width=6.4cm]{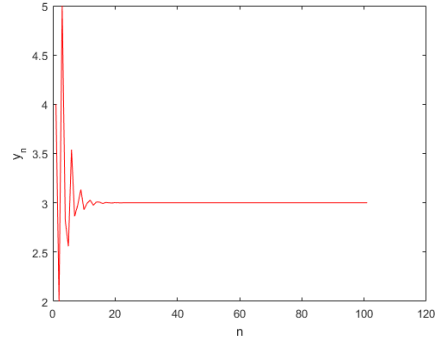}
 		\label{fig:1b}}
 	\hspace{0.2cm}
 	\subfigure[An attractor of the system \eqref{E:pt211}]{
 		\includegraphics[height=4.8cm,width=6.4cm]{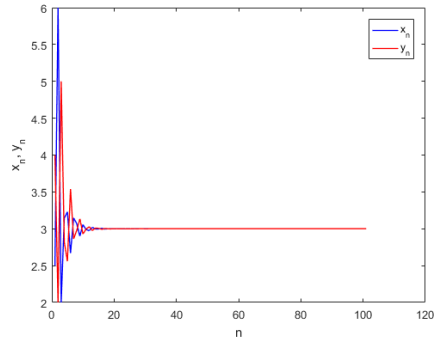}
 		\label{fig:1c}}
 	\caption{Plots for the system \eqref{E:pt211}}\label{fig:1}
 \end{figure}
 In this case, $\alpha>1$ and $0<p,\ q<1$, so the  unique positive equilibrium point $ (\bar{x},\ \bar{y})=(3, 3)$ of the system \eqref{E:pt1}  is globally asymptotically  stable (see Theorem \ref{T:dl4}). In Figure \ref{fig:1}, the plot of $x_n$ is shown in Figure \ref{fig:1} (a), the plot of $y_n$ is shown in Figure \ref{fig:1} (b), and an attractor of the system \eqref{E:pt211} is shown in Figure \ref{fig:1} (c).
 %%%%%%%%%%%%%%%a phase portrait 
  
\begin{example}\label{vd:2}
	Let $\alpha = 1.3, p = 0.9, q = 0.8$.  The system \eqref{E:pt1} can be written as
	\begin{equation}\label{E:pt212}
	x_{n+1}=1.3 + \dfrac{y_{n}^{0.9}}{y_{n-2}^{0.9}},\
	y_{n+1}=1.3 + \dfrac{x_{n}^{0.8}}{x_{n-2}^{0.8}}.
	\end{equation}
Consider \eqref{E:pt212}	with initial conditions $x_{-2} = 2.6$,  $x_{-1} = 1.8$, $x_0=3$, $y_{-2} = 3$, $y_{-1} = 5$ and $y_0 = 1$.
\end{example}
\begin{figure}[!ht]
	\centering
	\subfigure[Plot of $x_n$ for the system \eqref{E:pt212}]{
		\includegraphics[height=4.8cm,width=6.4cm]{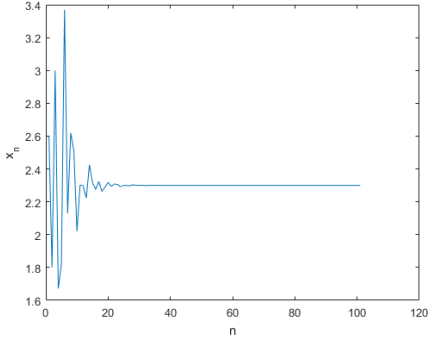}
		\label{fig:2a}}
	\hspace{0.2cm}
	\subfigure[Plot of $y_n$ for the system \eqref{E:pt212}]{
		\includegraphics[height=4.8cm,width=6.4cm]{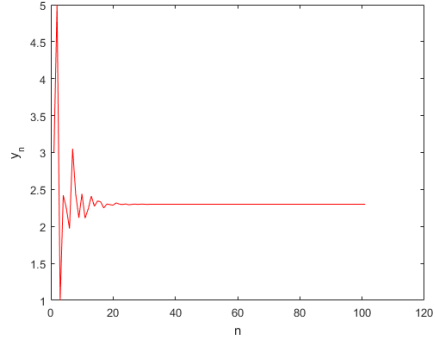}
		\label{fig:2b}}
	\hspace{0.2cm}
	\subfigure[An attractor of the system \eqref{E:pt212}]{
		\includegraphics[height=4.8cm,width=6.4cm]{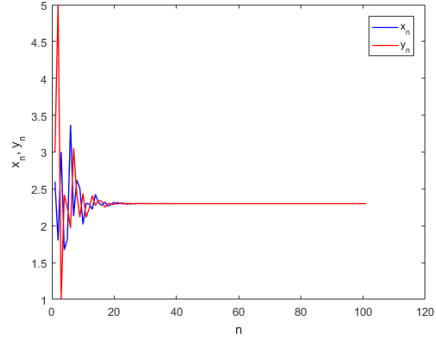}
		\label{fig:2c}}
	\caption{Plots for the system \eqref{E:pt212}}\label{fig:2}
\end{figure}
In this example, the  unique positive equilibrium point of the system \eqref{E:pt1}  is also globally asymptotically stable because $\alpha>1$ and $0<p,\ q<1$  satisfy conditions of  Theorem \ref{T:dl4}. In Figure \ref{fig:2}, the plot of $x_n$ is shown in Figure \ref{fig:2} (a), the plot of $y_n$ is shown in Figure \ref{fig:2} (b), and an attractor of the system \eqref{E:pt212} is shown in Figure \ref{fig:2} (c).
%%%%%%%%%%%%%%%%%%%%%%%%%%%%%%%%%%%%%%%%%%%%%a phase picture
\begin{example}\label{vd:3}
	Let $\alpha = 0.6, p = 0.8, q = 1.9$.  The system \eqref{E:pt1} can be written as
	\begin{equation}\label{E:pt213}
	x_{n+1}=0.6+\dfrac{y_{n}^{0.8}}{y_{n-2}^{0.8}},\
	y_{n+1}=0.6+ \dfrac{x_{n}^{1.9}}{x_{n-2}^{1.9}}. 
	\end{equation}
Examine system \eqref{E:pt213}	with initial conditions $x_{-2} = 1.6$,  $x_{-1} = 2.8$, $x_0=4$, $y_{-2} = 4$, $y_{-1} = 1.5$ and $y_0 = 6$.
\end{example}
\begin{figure}[!ht]
	\centering
	\subfigure[Plot of $x_n$ for the system \eqref{E:pt213}]{
		\includegraphics[height=4.8cm,width=6.4cm]{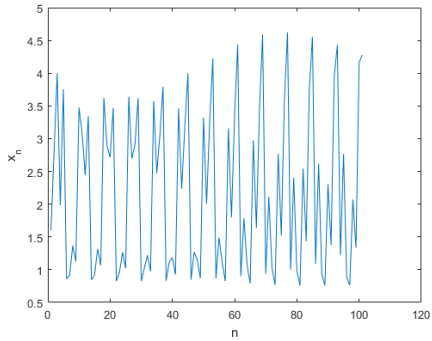}
		\label{fig:3a}}
	\hspace{0.2cm}
	\subfigure[Plot of $y_n$ for the system \eqref{E:pt213}]{
		\includegraphics[height=4.8cm,width=6.4cm]{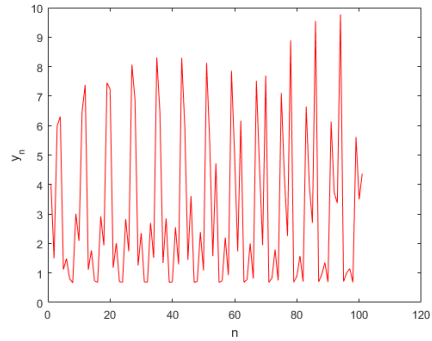}
		\label{fig:3b}}
	\hspace{0.2cm}
	\subfigure[Phase portrait  of the system \eqref{E:pt213}]{
		\includegraphics[height=4.8cm,width=6.4cm]{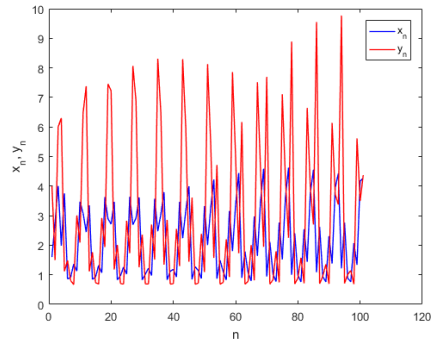}
		\label{fig:3c}}
	\caption{Plots for the system \eqref{E:pt213}}\label{fig:3}
\end{figure}
In this situation, since $\alpha<1$ and $q>1$, so they are not satisfied conditions of  Theorem \ref{T:dl4}. Therefore, the unique positive equilibrium point $ (\bar{x},\ \bar{y})=(1.6, 1.6)$ of the system \eqref{E:pt1}  is not globally asymptotically stable. In Figure \ref{fig:3}, the plot of $x_n$ is shown in Figure \ref{fig:3} (a), the plot of $y_n$ is shown in Figure \ref{fig:3} (b), and a phase portrait of the system \eqref{E:pt213} is shown in Figure \ref{fig:3} (c).
%%%%%%%%%%%%%%%%%%%%%%%%%%%%%%%%%%%%%%%%%%%%%
\begin{example}\label{vd:4}
	Let $\alpha = 0.3, p =1.2, q = 1.5$.  The system \eqref{E:pt1} can be written as
	\begin{equation}\label{E:pt214}
	x_{n+1}=0.3 + \dfrac{y_{n}^{1.2}}{y_{n-2}^{1.2}},\
	y_{n+1}=0.3 + \dfrac{x_{n}^{1.5}}{x_{n-2}^{1.5}}.
	\end{equation}
	Study system \eqref{E:pt214} with initial conditions $x_{-2} =  6$,  $x_{-1} =  8$, $x_0=3$, $y_{-2} = 3$, $y_{-1} =  5$ and $y_0 = 1$.
\end{example}
\begin{figure}[!ht]
	\centering
	\subfigure[Plot of $x_n$ for the system \eqref{E:pt214}]{
		\includegraphics[height=4.8cm,width=6.4cm]{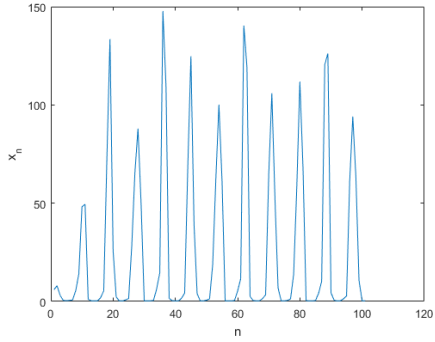}
		\label{fig:4a}}
	\hspace{0.2cm}
	\subfigure[Plot of $y_n$ for the system \eqref{E:pt214}]{
		\includegraphics[height=4.8cm,width=6.4cm]{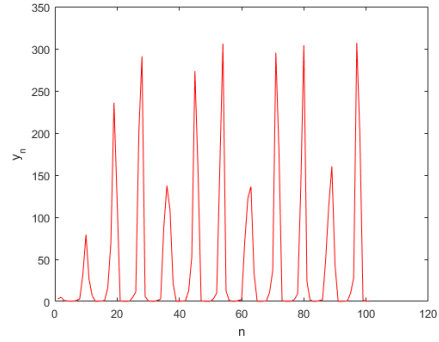}
		\label{fig:4b}}
	\hspace{0.2cm}
	\subfigure[Phase portrait  of the system \eqref{E:pt214}]{
		\includegraphics[height=4.8cm,width=6.4cm]{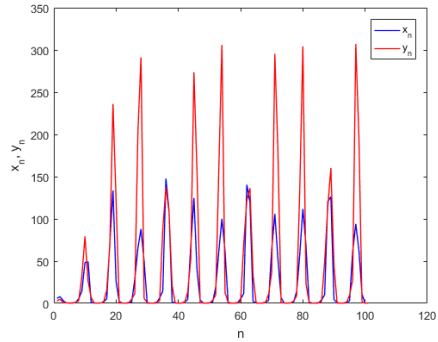}
		\label{fig:4c}}
	\caption{Plots for the system \eqref{E:pt214}}\label{fig:4}
\end{figure}
In this case, because $\alpha<1$, $p, q>1$, so they are not satisfied conditions of  Theorem \ref{T:dl4}, thus the  unique positive equilibrium point $ (\bar{x},\ \bar{y})=(1.3, 1.3)$ of the system \eqref{E:pt1}  is not globally asymptotically stable. In Figure \ref{fig:4}, the plot of $x_n$ is shown in Figure \ref{fig:4} (a), the plot of $y_n$ is shown in Figure \ref{fig:4} (b), and a phase portrait of the system \eqref{E:pt214} is shown in Figure \ref{fig:4} (c).
%x_{n+1}=\alpha+\dfrac{y_{n}^p}{y_{n-2}^p},\
%y_{n+1}=\alpha+ \dfrac{x_{n}^q}{x_{n-2}^q}

\end{document}